\pgfplotsset{compat=newest, ticks=none}
\DeclareMathAlphabet{\mathpzc}{OT1}{pzc}{m}{it}
\newcommand{\R}{{\mathbb R}}
\newcommand{\supp}{\operatorname{supp}}
\newcommand{\N}{{\mathbb N}}
\newcommand{\be}[1]{\begin{equation}\label{#1}}
\newcommand{\ee}{\end{equation}}
\newcommand{\prf}{\par\smallskip\noindent{\sl Proof. \/}}
\newcommand{\finprf}{\unskip\null\hfill$\;\square$\vskip 0.3cm}
\newenvironment{proof}{\prf}{\finprf}
\newtheorem{theorem}{Theorem}[section]
\newtheorem{lemma}[theorem]{Lemma}
\newtheorem{proposition}[theorem]{Proposition}
\newtheorem{remark}[theorem]{Remark}
\newtheorem{definition}[theorem]{Definition}
\numberwithin{equation}{section}
\newcommand{\nc}{\normalcolor}
\def\qed{\,\unskip\kern 6pt \penalty 500
\raise -2pt\hbox{\vrule \vbox to8pt{\hrule width 6pt
\vfill\hrule}\vrule}\par}
\definecolor{darkblue}{rgb}{0.05, .05, .65}
\definecolor{darkgreen}{rgb}{0.1, .65, .1}
\definecolor{darkred}{rgb}{0.8,0,0}
\date{}
\title{Symmetrization results for general nonlocal linear ellipitic and parabolic problems}
\author{Vincenzo Ferone
\footnote{Dipartimento di Matematica e Applicazioni ``Renato Caccioppoli'', Universit\`a degli Studi di Napoli Federico II, Via Cinthia n. 26, Complesso Universitario Monte Sant'Angelo, 80143 Napoli, Italy. \ E-mail: {\tt ferone@unina.it}}\ ,
Gianpaolo Piscitelli \footnote{Dipartimento di Scienze Economiche, Giuridiche, Informatiche e Motorie, Universit\`a degli Studi di Napoli Parthenope, Centro Direzionale, Isola C4, 80143 Napoli, Italy. \ E-mail: {\tt gianpaolo.piscitelli@uniparthenope.it}}\ 
\ and \ Bruno Volzone\footnote{Dipartimento di Scienze Economiche, Giuridiche, Informatiche e Motorie, Universit\`a degli Studi di Napoli Parthenope, Centro Direzionale, Isola C4, 80143 Napoli, Italy. \ E-mail: {\tt bruno.volzone@uniparthenope.it}}
}
\begin{document}

\maketitle

\begin{abstract}
We establish a Talenti-type symmetrization result in the form of mass concentration (\emph{i.e.} integral comparison) for very general linear nonlocal elliptic problems, equipped  with homogeneous Dirichlet boundary conditions.

\noindent In this framework, the relevant concentration comparison for the classical fractional Laplacian can be reviewed as a special case of our main result, thus generalizing the previous results in \cite{ferone2021symmetrization}.  

\noindent
Finally, using an implicit time discretization techniques,  similar results are obtained for the solutions of Cauchy-Dirichlet nonlocal linear parabolic problems.
\end{abstract}

\section{Introduction}
The aim of the present work is to establish some estimates, in the form of mass concentration comparisons, for solutions to general nonlocal elliptic homogeneous Dirichlet problems.\\
To be more specific, let us assume that $\Omega\subset\mathbb R^N$, $N\geq1$, is an open and bounded set and $K, J$ be measurable nonnegative function such that $J\not\equiv 0$ and
\begin{align}
\label{assumption1}
& K(x,y)=K(y,x)\quad \forall x,y\in\R^N,\\ \label{assumption2}
& x\mapsto \int_{\R^N}  K(x,y)\min\{|x-y|^2,1\}dy\in L_{loc}^1(\R^N),\\ \label{assumption3}
& K(x,y)\geq J(x-y).
\end{align}
Let us denote by $\Omega^\sharp$ the ball with the same measure of $\Omega$ and by $J^\sharp$ the Schwartz rearrangement of $J$ (see Section \ref{subsec_rearrangements} for details); the main result of the paper consists in developing new symmetrization techniques in order to achieve  comparison results for equations involving the following nonlocal operator:
\begin{equation}
    \label{operator}
\mathcal L u(x)= \mathrm{P.V.}\int_{\mathbb R^N}K(x,y)(u(x)-u(y))dy,
\end{equation}
where the principal value (P.V.) integral is meant in the sense that
\[
\mathcal L u(x)=\frac 12 \lim_{\varepsilon\to 0^+}\, \int_{\mathbb R^N \setminus B_\varepsilon(x)}K(x,y)(u(x)-u(y))dy.\]
The dual variational interpretation of the operator $\mathcal L$ will be specified in Subsection \ref{sec_the_nonlocl_pb}. We associate to $\mathcal L$ the following, suitable symmetrized operator ${\mathcal L}^{\sharp}$, whose kernel depends on the kernel $J$ introduced in the lower bound \eqref{assumption3}:
\begin{equation}
    \label{operator*}
{\mathcal L}^{\sharp}v(x)=\frac 12\, \mathrm{P.V.}\int_{\mathbb R^N}J^\sharp(x-y)(v(x)-v(y))dy.
\end{equation}

We denote $c_\sharp$ the radially increasing rearrangement of a nonnegative measurable function $c$ (see Section \ref{subsec_rearrangements} for details). The main Theorem of the paper is the following Talenti-type (see \cite{Talenti1}) symmetrization result.

\begin{theorem}
\label{main_thm_stationary}
Let $\mathcal L$ and $\mathcal L^\sharp$ be defined as in \eqref{operator} and \eqref{operator*} with the kernels $K,J$ satisfying \eqref{assumption1}-\eqref{assumption2}-\eqref{assumption3}, $f\in L^2(\Omega)$, $c\in L^\infty (\Omega)$ such that $c(x)\geq 0$ in $\Omega$. If $u$ and $v$ are the solutions of:
\begin{equation}
\label{problem1}
\begin{cases}
\mathcal{L}u+cu=f\ & \text{in }\Omega,\\
u=0 & \text{on }\mathbb R^n \setminus \Omega,
\end{cases}
\end{equation}
and
\begin{equation}
\label{problem2}
\begin{cases}
{\mathcal L}^{\sharp}v+c_\sharp v=f^\sharp \ & \text{in }\Omega^\sharp,\\
v=0 & \text{on }\mathbb R^n \setminus \Omega^\sharp,
\end{cases}
\end{equation}
respectively, then
\begin{equation}
\label{risultato_confronto}
\int_{B_r}u^\sharp(x)\,dx\leq \int_{B_r}v(x)\,dx\quad \forall r>0,
\end{equation}
where $B_r$ is the ball centered in the origin.
Moreover we have the following energy estimate:
\begin{equation}
\label{energy_estimate}
\begin{split}
&\frac 12 \int_{\R^{N}}\int_{\R^{N}}K(x,y) \left(u(x)-u(y)\right)^2dydx+\int_{\R^N} c(x)u^2(x)dx\\ 
&\quad\leq\frac 12\int_{\R^{N}}\int_{\R^{N}}J^\sharp(x-y) \left(v(x)-v(y)\right)^2dydx+\int_{\R^N} c_\sharp(x)v^2(x)dx.
\end{split}\end{equation}
\end{theorem}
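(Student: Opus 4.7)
The plan is to carry out a Talenti-type mass-concentration analysis adapted to the general nonlocal setting, generalizing the scheme of \cite{ferone2021symmetrization}. The idea is, at every level $t>0$, to test the weak formulation of \eqref{problem1} against a Lipschitz truncation of $u$ localized on the super-level set $\{u>t\}$, transfer the resulting nonlocal Dirichlet energy from $u$ to its symmetric decreasing rearrangement $u^{\sharp}$ via a Riesz-type inequality combined with the pointwise bound $K\ge J$, and then deduce \eqref{risultato_confronto} by comparison with the corresponding equality for $v$.

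First, I would reduce to $f\ge 0$ (hence $u,v\ge 0$) via the weak maximum principle associated with the nonnegative quadratic form attached to $\mathcal{L}+c$, and observe that $v=v^{\sharp}$ by the radial symmetry/uniqueness of \eqref{problem2}. For $t,h>0$, plugging the admissible test function $\varphi_{t,h}:=\min\{(u-t)^{+}/h,\,1\}$ into the weak formulation of \eqref{problem1} and letting $h\to 0^{+}$ yields, for a.e.\ $t>0$, the identity
\[
\tfrac12\int_{\R^{N}}\!\!\int_{\R^{N}} K(x,y)\,(u(x)-u(y))\bigl(\chi_{\{u>t\}}(x)-\chi_{\{u>t\}}(y)\bigr)\,dx\,dy+\int_{\{u>t\}}\! c\,u\,dx=\int_{\{u>t\}}\! f\,dx,
\]
and the analogous identity for $v$ with $J^{\sharp}$, $c_{\sharp}$ and $f^{\sharp}$, in which one may further identify $\{v>t\}=\{v^\sharp>t\}$.

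The central step is to estimate the double integral from below by the analogous quantity for $u^{\sharp}$ with kernel $J^{\sharp}$. Since the integrand is pointwise nonnegative, hypothesis \eqref{assumption3} allows replacing $K$ by $J$ with a $\ge$ sign. Then, using the layer-cake representation $u(x)-u(y)=\int_{0}^{\infty}(\chi_{\{u>s\}}(x)-\chi_{\{u>s\}}(y))\,ds$ together with $\chi_{\{u\le s\}}=1-\chi_{\{u>s\}}$, the remaining integral reduces to weighted integrals of products of characteristic functions of super-level sets of $u$. The classical Riesz rearrangement inequality applied piecewise in $s$, followed by re-integration, then gives
\[
\int\!\!\int J(x-y)(u(x)-u(y))(\chi_{\{u>t\}}(x)-\chi_{\{u>t\}}(y))\,dx\,dy\ge\int\!\!\int J^{\sharp}(x-y)(u^{\sharp}(x)-u^{\sharp}(y))(\chi_{\{u^{\sharp}>t\}}(x)-\chi_{\{u^{\sharp}>t\}}(y))\,dx\,dy.
\]
A technical difficulty is that $J$ need not be integrable (e.g.\ in the fractional Laplacian case), so this step must be justified by a truncation $J\wedge n$ and a passage to the limit, with careful bookkeeping of the cancellation of the divergent constants; assumption \eqref{assumption2} guarantees the finiteness of all limiting quantities. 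I expect this nonlocal P\'olya--Szeg\H{o} step to be the main obstacle.

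Finally, the lower-order and forcing terms are controlled by Hardy--Littlewood: one has $\int_{\{u>t\}}\! f\,dx\le \int_{\{u^{\sharp}>t\}}\! f^{\sharp}\,dx$, and since $c_{\sharp}$ is the \emph{increasing} rearrangement of $c$ the analogous rearrangement inequality gives $\int_{\{u>t\}} c\,u\,dx\ge \int_{\{u^{\sharp}>t\}} c_{\sharp}\,u^{\sharp}\,dx$. Putting everything together, $u^{\sharp}$ satisfies an integro-differential inequality of the same shape as the equality satisfied by $v$; the mass comparison \eqref{risultato_confronto} then follows from a maximum-principle argument applied to $r\mapsto\int_{B_{r}}(u^{\sharp}-v)\,dx$ on $\Omega^{\sharp}$, exploiting that this function vanishes at $r=0$ and for $r\ge (|\Omega|/\omega_N)^{1/N}$. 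The energy estimate \eqref{energy_estimate} is obtained separately by testing \eqref{problem1} and \eqref{problem2} with $u$ and $v$ themselves, together with the standard nonlocal P\'olya--Szeg\H{o} inequality for the quadratic form $\int\!\!\int K(x,y)(u(x)-u(y))^{2}\,dx\,dy$.
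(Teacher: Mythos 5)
Your Step 1 is essentially sound and even takes a legitimately different technical route from the paper: you pass to the limit $h\to 0$ first and then symmetrize the level-set quantity by truncating the kernel ($J\wedge n$, which is integrable by \eqref{assumption2}), applying the classical Riesz inequality to the cross terms of a layer-cake expansion, and letting $n\to\infty$; the paper instead keeps the truncation $\mathcal G_{t,h}$, writes $J(x-y)=\int_0^{+\infty}e^{-\tau/J(x-y)}d\tau$ so that the weights $e^{-\tau/J}$ are integrable (Lemma \ref{pesoL1}), applies the Almgren--Lieb generalized Riesz inequality, and only then sends $h\to0$ through the coarea-type formula of Proposition \ref{coarea_nonlocale2}. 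Either way one arrives at the level-set inequality \eqref{insieme_Br} for $u^\sharp$ and at the ball identity \eqref{insieme_Br_v} for $v$, so up to this point your plan is acceptable (modulo the detail, which you do not address, that your inequality is only available for radii of the form $r_u(t)$ or $r_u(t^-)$, not for all $r$; flat zones of $u^\sharp$ have to be handled by a convexity argument as in the paper).

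The genuine gap is the final step. Deducing \eqref{risultato_confronto} from these two relations is \emph{not} ``a maximum-principle argument applied to $r\mapsto\int_{B_r}(u^\sharp-v)\,dx$'' in any routine sense, and your supporting claim is false: this function does not vanish for $r\ge(|\Omega|/\omega_N)^{1/N}$, since that would mean $\|u\|_{L^1}=\|v\|_{L^1}$, whereas only $\|u\|_{L^1}\le\|v\|_{L^1}$ holds and it is itself a consequence of the theorem, not an a priori fact. Unlike the local Talenti scheme, the nonlocal flux $\int_{B_r}\int_{B_r^c}J^\sharp(x-y)\bigl(u^\sharp(x)-u^\sharp(y)\bigr)\,dy\,dx$ at radius $r$ depends on $u^\sharp$ globally, so comparing it with the corresponding quantity for $v$ at a putative positive maximum point $\bar r$ requires new ingredients: the radial monotonicity of the convolutions $\Phi_1,\Phi_2$ in \eqref{Phi_1}--\eqref{Phi_2} (Proposition \ref{prop_monotonicity}, resting on the geometric Lemma \ref{lemma_distanza}), a radial integration by parts exploiting \eqref{max>0} and \eqref{min<0}, and the min/max principle of Proposition \ref{MaxMin} to control the term $\int_{B_{\bar r}}c_\sharp(u^\sharp-v)\,dx$; this whole mechanism is the core of the paper's Step 2 and is absent from your proposal. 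A minor further point: your derivation of \eqref{energy_estimate} via ``the standard nonlocal P\'olya--Szeg\H o inequality'' goes in the wrong direction (it bounds the $u$-energy from below by the $u^\sharp$-energy, which does not compare with $v$); the paper instead tests with $u$ and $v$ and uses the already established concentration comparison through Proposition \ref{prop_equiv_conc}(ii) together with Hardy--Littlewood to get $\int fu\le\int f^\sharp v$.
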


For what concerns the nonlocal \emph{parabolic} case, we consider the cylindrical domain of $\R^{N+1}$ given by $\Omega\times [0,T]$ for $T>0$ and give a related comparison result. We will use the following convention: if $f(x,t)$ is defined in $\Omega\times [0,T]$, then $f^\sharp(x,t)$ and $f_\sharp(x,t)$ denotes the symmetrized function with respect to $x$, for $t$ fixed (\emph{i.e.} Steiner symmetrizations of $f$): 
\begin{theorem}
\label{main_thm_evolution}
Let $\mathcal L$ and $\mathcal L^\sharp$ be defined as in \eqref{operator} and \eqref{operator*} with the kernels $K,J$ satisfying \eqref{assumption1}-\eqref{assumption2}-\eqref{assumption3}, 
$c\in L^\infty (\Omega\times (0,T))$ is nonnegative
, $f\in L^2(\Omega\times (0,T))$, $u_0\in L^2(\Omega)$ and $v_0=v^\sharp_0\in L^2(\Omega^\sharp)$ such that $\int_{B_r}u_0^\sharp (x)dx\leq\int_{B_r}v_0 (x)dx$. If $u$ and $v$ are the solutions of:
\begin{equation}
\label{problem1_evo}
\begin{cases}
\displaystyle u_{t}+\mathcal{L}u+cu=f & \text{in }\Omega\times (0,T),\\
u=0 &\text{in } (\R^N\setminus\Omega) \times (0,T),\\
u(x,0)=u_0(x) & \text{in }\mathbb R^N,\\
\end{cases}
\end{equation}
and
\begin{equation}
\label{problem2_evo}
\begin{cases}
\displaystyle v_{t}+{\mathcal L}^{\sharp}v +c_\sharp v =f^\sharp & \text{in }\Omega^\sharp, \\
v =0 &\text{in } (\R^N\setminus\Omega^\sharp) \times (0,T),\\
v(x,0)=v_0(x) & \text{on }\mathbb R^N,\\
\end{cases}
\end{equation}
respectively, then
\begin{equation}
\label{risultato_confronto_evo}
\int_{B_r}u^\sharp(x,t)\,dx\leq \int_{B_r}v(x,t)\,dx\quad \forall r>0 \ \ \forall t\in [0,T].
\end{equation}
\end{theorem}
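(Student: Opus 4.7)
The strategy is to perform an implicit time discretization (Rothe method) and apply the elliptic result of Theorem \ref{main_thm_stationary} inductively at each time step. Fix $n\in\N$, let $\Delta t=T/n$ and $t_k=k\Delta t$, and set $f^k(x)=\frac{1}{\Delta t}\int_{t_{k-1}}^{t_k}f(x,s)\,ds$, $c^k(x)=\frac{1}{\Delta t}\int_{t_{k-1}}^{t_k}c(x,s)\,ds$. Starting from $u^0=u_0$ and $v^0=v_0$, inductively define $u^k$ and $v^k$ as the unique weak solutions of the resolvent elliptic problems
\begin{equation*}
\mathcal L u^k+\Bigl(c^k+\tfrac{1}{\Delta t}\Bigr)u^k=f^k+\frac{u^{k-1}}{\Delta t}\ \text{in }\Omega,\qquad \mathcal L^\sharp v^k+\Bigl(c^k_\sharp+\tfrac{1}{\Delta t}\Bigr)v^k=(f^k)^\sharp+\frac{v^{k-1}}{\Delta t}\ \text{in }\Omega^\sharp,
\end{equation*}
each supplemented with homogeneous Dirichlet data outside the respective domain.

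The core step is to show by induction on $k$ that $\int_{B_r}(u^k)^\sharp\,dx\le\int_{B_r}v^k\,dx$ for every $r>0$; the case $k=0$ is precisely the hypothesis on the initial data. For the inductive step, I would combine the induction hypothesis with the trivial identity for $(f^k)^\sharp$ and the subadditivity of mass concentration (which, via the Hardy--Littlewood layer-cake identity $\int_{B_r}g^\sharp=\sup_{|E|=|B_r|}\int_E g$, gives $\int_{B_r}(A+B)^\sharp\le \int_{B_r}A^\sharp+\int_{B_r}B^\sharp$) to deduce that the sources $g_1^k:=f^k+\Delta t^{-1}u^{k-1}$ and $g_2^k:=(f^k)^\sharp+\Delta t^{-1}v^{k-1}$ satisfy $\int_{B_r}(g_1^k)^\sharp\,dx\le\int_{B_r}g_2^k\,dx$. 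I would then invoke the \emph{mass-concentration} version of Theorem \ref{main_thm_stationary} --- namely the statement that, provided the zero-order coefficients are related by $c\ge 0$ and $c_\sharp$ is the monotone rearrangement, the conclusion \eqref{risultato_confronto} continues to hold whenever the source of the symmetrized problem is merely more concentrated than the source of the original one (not necessarily its Schwarz rearrangement). This is an extension that the proof of Theorem \ref{main_thm_stationary} effectively establishes as an intermediate step, and applied to $(u^k,v^k)$ with data $(g_1^k,g_2^k)$ it closes the induction.

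To conclude, I would pass to the limit as $n\to\infty$. Introduce the piecewise-constant interpolants $\bar u_n(x,t)=u^k(x)$, $\bar v_n(x,t)=v^k(x)$ for $t\in(t_{k-1},t_k]$, together with their piecewise-linear counterparts $\tilde u_n,\tilde v_n$. Testing the discrete equations with $u^k$ and $v^k$ respectively and summing over $k$ yields uniform bounds in $L^\infty(0,T;L^2)$ and in the natural nonlocal energy space associated with $\mathcal L$ and $\mathcal L^\sharp$, while the discrete equations themselves provide a uniform bound on the discrete time derivative. A standard Aubin--Lions compactness argument, combined with the linearity of the equations, identifies the limits with the unique weak solutions $u$ and $v$ of \eqref{problem1_evo} and \eqref{problem2_evo}. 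Since the concentration inequality $\int_{B_r}(\bar u_n)^\sharp\le\int_{B_r}\bar v_n$ is stable under weak $L^2$-convergence (the right-hand side is a continuous linear functional, and the left-hand side is lower semicontinuous), \eqref{risultato_confronto_evo} follows at almost every $t$, and then for all $t$ by continuity in time of $u,v$ as $L^2$-valued maps.

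The main obstacle is the inductive step: Theorem \ref{main_thm_stationary} as formulated assumes the symmetrized problem to be driven by the Schwarz rearrangement $f^\sharp$, whereas at the discrete level the two right-hand sides $g_1^k$ and $g_2^k$ differ precisely by $\Delta t^{-1}(u^{k-1}-v^{k-1})$, which is not a rearrangement of any function on the first-problem side and enjoys only the weaker concentration comparison. Extracting --- or equivalently re-proving --- the version of Theorem \ref{main_thm_stationary} for concentration-comparable data is therefore the essential technical ingredient; once available, the Rothe discretization machinery proceeds along well-established lines.
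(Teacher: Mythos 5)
Your proposal is essentially the paper's own proof: the same implicit Euler (Rothe) discretization with time-averaged data, the same inductive use of the elliptic comparison of Theorem \ref{main_thm_stationary} in the form extended to right-hand sides that are merely more concentrated (exactly the content of Remark \ref{Extensionconc}, with the observation that $(c^k+1/\Delta t)_\sharp=c^k_\sharp+1/\Delta t$ and subadditivity of concentration for $f^k+u^{k-1}/\Delta t$), followed by uniform energy estimates and passage to the limit using the equivalent formulation of Proposition \ref{prop_equiv_conc}. The only caveat is your appeal to Aubin--Lions: for general kernels satisfying only \eqref{assumption1}--\eqref{assumption3} the embedding $H_\Omega(\R^N,K)\hookrightarrow L^2(\Omega)$ need not be compact (e.g.\ integrable kernels), but strong compactness is unnecessary since the equations are linear, so weak convergences suffice to identify the limits, and the concentration inequality should be passed to the limit for a.e.\ $t$ by testing with products $\varphi(x)\xi(t)$, as in the paper, before invoking continuity in time.
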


\noindent {\sc{Main novelties of the paper.}} We mention that a symmetrization result, in the form of mass concentration comparison, for the fractional Laplacian has been already obtained, for example, in \cite{ferone2021symmetrization}, where the singular kernel is then given by 
\[K(y)=\frac{C}{|x|^{N+2s}},\] with $0<s<1$ and $C$ a proper normalization constant. Moreover, analogous comparison results has been also obtained for singular nonlocal elliptic problem \cite{brandolini2022comparison}.\\[0.2cm]
\noindent The main difficulty in proving Theorem \ref{main_thm_stationary} relies with no doubt in the \emph{very} general form of the kernel $K$ described by \eqref{assumption1}-\eqref{assumption2}-\eqref{assumption3}. Indeed, despite of \cite{ferone2021symmetrization,brandolini2022comparison}, the rearrangement estimates in the weak formulation of our problem \emph{can not} yield an explicit expression in radial coordinates in terms of hypergeometric functions, for which the principal features, such as the main monotonicity and asymptotic behaviors, are known. Thus we had to look for new and more flexible techniques which could be adapted to the present general context.\\[0.1cm]
The remarkable features of the new techniques can be identified in the tools used in the proof of Theorem \ref{main_thm_stationary}, which is divided in two relevant steps. In the first one, choosing the classical Talenti's truncature of the weak solution as a test function, we first apply a \emph{Riesz type rearrangement inequality}, in the form described Proposition \ref{riesz_generalized}, directly on the weak formulation of the problem. This step can be seen as an usage of a \emph{nonlocal} P\'olya-Szeg\H o type inequality for the achievement of a suitable energy estimate (see \cite{lions} for a similar approach in the case of local operators). At this stage, an exponential integrability of the involved kernels, derived by the L\'evy property \eqref{assumption3} in Lemma \ref{pesoL1}, is required. Therefore, we use a nonlocal variant of the coarea formula on the super and sub level sets of the solution (see Proposition \ref{coarea_nonlocale}), which turns out to be essential in treating the behavior on the level sets at height $h>0$; particularly, without this property the passage to the limit as $h\rightarrow 0$ looks quite difficult in such general context.\\[0.2cm]
In the second step, we finalize a comparison result. To this aim, we introduce two key functions defined through the convolutions $\Phi_{1}$ and $\Phi_{2}$ of $J^\sharp$ with the characteristic functions of the ball or its complementary set, whose radial monotonicity is achieved in Proposition \ref{prop_monotonicity} by suitable and nontrivial geometric considerations. Besides, the main argument of the comparison result is based on subtle contradiction argument applied to nonlocal estimates derived from the weak formulations of the initial problem \eqref{problem1} and the symmetrized one \eqref{problem2}. In this point, the monotonicity of the functions $\Phi_{i}$ plays an essential role. Finally, we close the argument by invoking a special maximum/minimum principle in Proposition \ref{MaxMin} applied on the lower order term.\\
The proof of the parabolic mass concentration comparison exhibited in Theorem \ref{main_thm_evolution}  is based on the implicit time discretization scheme, employed in \cite{ALTb,alvino2010comparison} for linear problems and firstly introduced in \cite{vazquez}, \cite{VANS05} for symmetrization in nonlinear diffusion equations. More precisely, we reduce the problem \eqref{problem1_evo} to a sequence of elliptic problems, to which we can apply the comparison result \eqref{risultato_confronto}. See also the survey \cite{VANS05}.
\\[0.4 pt]

\noindent {\sc{Some comments on relevant previous results in the literature.}} 
Actually, the effect of symmetrization on the fractional Laplacian operator \begin{equation}
    \label{fractional_lap}
(-\Delta)^su=\gamma(N,s)\,\text{P.V.} \int_{\R^N}\frac{u(x)-u(y)}{|x-y|^{N+2s}}dy
\end{equation}
being $\gamma(N,s)$ a suitabile normalization constant, has already been exploited in \cite{dBVol}, \cite{feostingaV} for fractional elliptic equations and then in \cite{VazVol1}, \cite{VazVol2}, \cite{VazVolSire}, \cite{VOLZNONLINEAR} in the context of nonlocal diffusion equations of porous medium type. In those papers a symmetrization result in terms of mass concentration of the form \eqref{risultato_confronto} is obtained employing the \emph{local} interpretation of \eqref{fractional_lap} in terms of an extension problem settled on an infinite cylinder $\mathcal{C}_{\Omega}=\Omega\times(0,\infty)$, which was established in the classical result by Caffarelli and Silvestre \cite{Caffarelli-Silvestre} and generalized in \cite{Stinga-Torrea}. In such setting, the extra variable $y>0$ is fixed in the symmetrization techniques, therefore the Steiner symmetrization approach can be handled in the extensions problems associated to the nonlocal problems \eqref{problem1} and \eqref{problem2}. Finally, the mass concentration estimate \eqref{risultato_confronto} is established in the limit as $y\rightarrow 0$. We explicitly remark that such an approach is \emph{not} available for our context, since, to our knowledge, no extension problem can be in principle associated to nonlocal operators with general kernels. Moreover, the \emph{direct} symmetrization approach has the notable benefit to highlight the considerable differences with respect the local results, see for instance the explicit counterexamples in \cite{ferone2021symmetrization} for the fractional laplacian operator.\\[0.5pt]
We also mention that in the recent paper \cite{galiano2022} some $L^p$ estimates of solutions of nonlocal elliptic an parabolic problems with \emph{integrable} kernels $K$ are obtained by Talenti's type symmetrization techniques. We remark that these results are consequence of our general Theorems \ref{main_thm_stationary} and \ref{main_thm_evolution}.\\[0.4 pt]

\noindent {\sc{Potential applications of the results and generalizations.}}
The class of equations we are treating appears in several
contexts, and has attracted a lot of interests in different fields, particularly where anomalous diffusions appear. Examples of the central role played by nonlocal operators in the Applied Sciences can be found, for instance, in Probability theory (since they are generators of stochastic L\'evy processes, \emph{i.e.} special stochastic processes with jumps), Fluid mechanics (for example, in the SQG equation) or in 
Mathematical physics (relativistic Schr\"odinger operators or the Boltzmann equation), peridynamics theory (see \emph{e.g.} in \cite{aksoylu2010results, bellido2014existence,du2012analysis,mengesha2013analysis,mengesha2014bond}). For a very rich of more detailed references we refer to the book \cite[Sec. 1.1]{fernandez2023integro} or \cite{ROSOTONSURVEY}.\\[0.2cm]
In particular, we observe that the results established in the present work allows to consider operators of the form $\mathcal L$ which are \emph{stable-like operators}, that is infinitesimal generators of processes associated with  kernels $K$ such that $K(x,y)=\mathsf{K}(x-y)$, where $\mathsf{K}$ is \emph{comparable} to the kernel $|x|^{-N-2s}$ of the fractional Laplacian (also called rough kernels). In this case, the kernel $\mathsf{K}$ needs not to be homogeneous but it satisfies a strong ellipticity condition 
\begin{equation}
    \label{rough_assumptions}
\frac{C_1}{|y|^{N+2s}}\leq \mathsf{K}(y)\leq \frac{C_2}{|y|^{N+2s}},
\end{equation}
for some positive constants $C_1$ and $C_2$, $C_{1}\leq C_{2}$. Such kind of operators are widely studied in the literature, see \cite[Sec. 2.1.6]{fernandez2023integro} and the references therein. The importance of such kernels in the theory is confirmed, for instance by recent important results on obstacle problems \cite{ros2023obstacle}, \cite{ros2023semiconvexity} and the regularity of the free boundaries  \cite{figalli2023regularity}.
\\
To this concern, some possible, future applications of our result might regard the study of decay estimates and the asymptotics related to nonlocal nonlinear diffusion equations with symmetric kernels verifying \eqref{rough_assumptions}, namely nonlinear parabolic equations of the form
\begin{equation*}
u_t+\mathcal L \phi(u)=0,
\end{equation*}
for some increasing odd diffusivity $\phi(t)$: for instance, the choice $\psi(t)=|t|^{m-1}t$ gives rise to a nonlocal equation of porous medium type. Such types of models attracted a lot of attention in the recent literature, see for instance \cite{prova}. Also operators with more general L\'evy kernels satisfying 
upper and lower, weak scaling conditions might be considered: this types of kernels with explicit examples were studied for instance in \cite{bae_kang} (see also \cite{KasMimica}).
Just to give some explicit, interesting examples, we can consider for instance symmetric kernels of the form
\[
\mathsf{K}(y)=\sum_{i=1}^{k}\frac{1}{|y|^{N+2s_{i}}},\quad k\in\mathbf{N}, s_{i}\in (0,1)\text{ for }i=1,...,k:
\] 
in this case, it is clear that the radial kernel in \eqref{assumption3} is just $J(r)=\mathsf{K}(r)$, being $r$ the radial coordinate, and the operators $\mathcal{L}, \mathcal{L}^{\sharp} $ is the sum of fractional Laplacians
\[
\sum_{i=1}^{k}(-\Delta)^{s_{i}}.
\]
We might also take into account kernels of the logarithmic type in the form
\[
\mathsf{K}(y)=\frac{\log^{\varepsilon}(1+|y|)}{r^{N+2}},
\]
for some small $\varepsilon>0$. For more interesting examples of kernels and properties of the related diffusion equations, we refer to the forthcoming paper \cite{QuirGonzavSor}.
\\
We finally observe that our techniques might also contribute to give results in the interesting framework of nonlocal anisotropic equations, see for instance \cite{ROSOTONSURVEY} and results in the related nonlinear diffusion theory \cite{QuirAnis}.\\[0.4 pt]

\noindent {\sc{Organization of the paper.}} The paper is organized as follows. In Section \ref{sec_prelim}, we fix the notations and give some fundamental preliminary results. Section \ref{sec_proof} is entirely devoted to the proof of the  main Theorem \ref{main_thm_stationary} of the paper. In Section \ref{parabolic_sec} we prove the mass concentration comparison results in the parabolic setting.

\section{Notations and Preliminaries}
\label{sec_prelim}

In order to fix the notation and prove the main Theorems of the paper, we need to recall some useful results on symmetrization, the nonlocal problems we are dealing with and some general results  related to the nonlocal operators.

We denote by with $B_r(x_0)$ the open  ball in $\mathbb{R}^N$, centered at $x_0$, of radius $r$ and, sometimes, we put $B_r=B_r(0)$.
The measure of the unit ball is denoted by $\omega_N:=|B_1|$. Furthermore, for any set $E\subseteq \R^N$, we denote by $E^{\sharp}$ the ball of $\mathbb{R}^{N}$ centered at the origin with the same Lebesgue measure as $E$ ($E^{\sharp}=\R^N$ if $|E|=+\infty$). 

\subsection{Rearrangements}\label{subsec_rearrangements}

In this subsection we recall the definiton of rearrangements and some properties which will be used in the following. For a more exhaustive treatment of the argument we refer, for example, to \cite{ChRice}, \cite{Hardy}, \cite{KawB}, \cite{Kesavan}.

Let us consider a real measurable function $f$ on an open set $\Omega\subset\R^N$ and, for any $t\geq 0$, the set 
\[\Omega_f^t=\left\{  x\in\Omega:\left\vert f\left(
x\right)  \right\vert >t\right\}.\]
We assume that the
\emph{distribution function} $\mu_{f}$ of $f$ is such that
\begin{equation}
\label{distribution}
\mu_{f}(t)  :=\left\vert \Omega_f^t\right\vert<+\infty \qquad\text{for every }t
>0,
\end{equation}
We recall that $\mu_f(\cdot)$ is a right-continuous function, decreasing from $\mu_f (0)=|\supp(f)|$ to $\mu_f(+\infty)=0$ as $t$ increases from 0 to $+\infty$. It presents a discontinuity at every value $t$ which is assumed by $|f|$ on a set of positive measure, and, for such a value of $t$, it holds
\[
\mu_f(t^-)-\mu_f(t)=|\{x\in\Omega:\left\vert f\left(
x\right)  \right\vert =t \}|.
\]

For every $t\geq 0$, we put $r_f(t)=\left(\frac{\mu_f(t)}{\omega_N}\right)^\frac 1N$ and it is clear that $(\Omega_f^t)^\sharp=B_{r_f(t)}$. Furthermore, we observe that $r_f(t)$ is also right-continuous. Obviously we put $r_f(t^-)=\left(\frac{\mu_f(t^-)}{\omega_N}\right)^\frac 1N$.

The \emph{one dimensional decreasing rearrangement} of $f$ is
\begin{equation}
    \label{f^*}
f^{\ast}\left(  \sigma\right)  =\sup\left\{ t\geq0:\mu_{f}\left(  t\right)
>\sigma\right\}\qquad\sigma\in\left[0,+\infty \right),
\end{equation}
that is, $f^*$ is the distribution function of $\mu_f$, so it is a right-continuous function.
We stress that if $\mu_f$ is strictly decreasing, then $f^*$ extends to the whole of the half line $[0,+\infty[$ the inverse function of $\mu_f$. In the general case we have that $f^*(\mu_f(t))\le t$, for $t\in[0,+\infty[$, and $\mu_f(f^*(s))\le s$, for $s\in[0,+\infty[$. We also observe that, if $\mu_f(t)$ has a jump, i.e., $\mu_f(t)<\mu_f(t^-)$ for some $t$, then $f^*(s)$ has a flat zone, i.e., $f^*(s)=t,\ \forall s\in[\mu_f(t),\mu_f(t^-)]$ (see Fig. \ref{figuraa}). Similarly, if $\mu_f(t)$ has a flat zone then $f^*(s)$ has a jump.

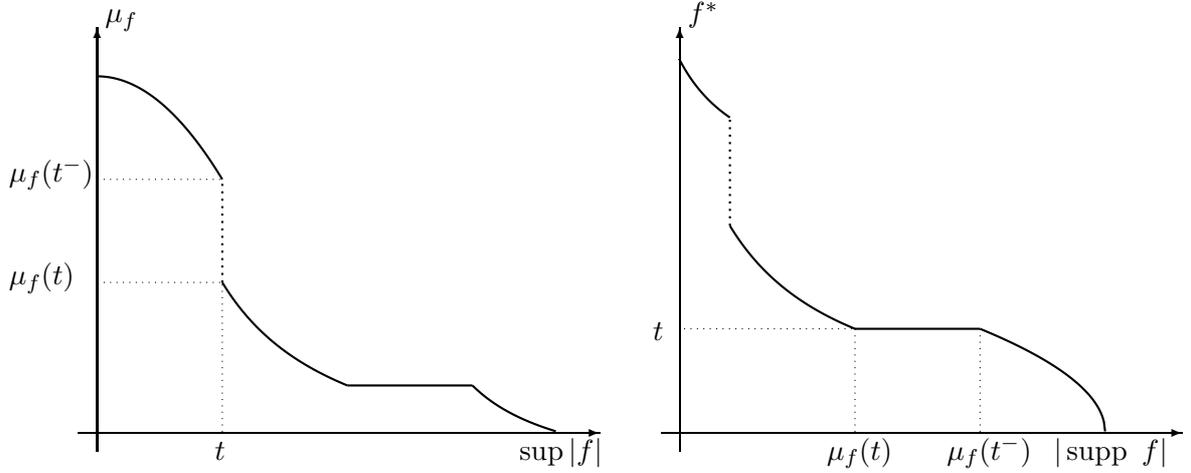
\begin{figure}[ht]\label{figuraa}
\begin{picture}(450,200)
\put(37,-1){\begin{tikzpicture}
\begin{axis}[samples=100,xmin=0,xmax=2.5,
      ymin=0,ymax=2.5,
    axis line style={draw=none},
      tick style={draw=none}, xticklabels={,,},
      yticklabels={,,}]
    \addplot[domain = 1.2:1.8, thick,dotted](0.6,x); 
    \addplot[domain = .33:1.2,dotted](0.6,x); 
    \addplot[domain = 0:.6,dotted](x,1.8); 
    \addplot[domain = 0:.6,dotted](x,1.2); 
    \addplot[domain = 0:{0.6}, thick](x,{2.4-5*(x^2)/3}); 
   \addplot[domain = .6:1.2, thick](x,{18/(25*x)}); 
    \addplot[domain = 1.2:1.8, thick](x,3/5); 
   \addplot[domain = 1.8:2.2, thick](x,{3/(10*x-13)}); 
\end{axis}
\end{tikzpicture}}
\put(227,-1){\begin{tikzpicture}
\begin{axis}[samples=100,xmin=0,xmax=2.5,
      ymin=0,ymax=2.5,
    axis line style={draw=none},
      tick style={draw=none}, xticklabels={,,},
      yticklabels={,,}]
    \addplot[domain = 1.53:2.15,thick,dotted](0.6,x); 
    \addplot[domain = 0.33:.93,dotted](1.2,x); 
    \addplot[domain = 0.33:.93,dotted](1.8,x); 
    \addplot[domain = 0.38:1.2,dotted](x,.93); 
   \addplot[domain = .6:1.2, thick](x,{.33+18/(25*x)}); 
   \addplot[domain = 0.2:{0.6}, thick](x,{1.66+.3/x}); 
    \addplot[domain = 1.8:{2.4}, thick](x,{.33+sqrt((2.4-x)*3/5)}); 
    \addplot[domain = 1.2:1.8, thick](x,0.93); 
\end{axis}
\end{tikzpicture}}
\put(30,20){\vector(1,0){195}}
\put(37,13){\vector(0,1){160}}
\put(248,20){\vector(1,0){195}}
\put(255,13){\vector(0,1){160}}
\put(258,175){$f^*$}
\put(40,175){$\mu_f$}
\put(81,10){$t$}
\put(4,76){$\mu_f(t)$}
\put(4,115){$\mu_f(t^-)$}
\put(310,10){$\mu_f(t)$}
\put(355,10){$\mu_f(t^-)$}
\put(395,10){$|\supp\ f|$}
\put(195,10){$\sup|f|$}
\put(245,55){$t$}
\end{picture}
\caption{A distribution function which presents a discontinuity and a flat zone.}
\end{figure}



If $\Omega$ is bounded, the 
\emph{one dimensional increasing rearrangement} of $f$ is
\[
f_{\ast}\left(\sigma\right)  =f^\ast(|\Omega|-\sigma)\qquad\sigma\in\left(  0,\left\vert \Omega\right\vert \right).
\]
We call the \emph{radially decreasing rearrangement} (or \emph{Schwarz decreasing rearrangement}) of $f$, the function
\[
f^{\sharp}\left(  x\right)  =f^{\ast}(\omega_{N}\left\vert x\right\vert
^{N})\qquad x\in\Omega^{\sharp};
\]
and we call the \emph{radially increasing rearrangement} of $f$, the
function
\[
f_{\sharp}\left(  x\right)  =f_{\ast}(\omega_{N}\left\vert x\right\vert
^{N})\qquad x\in\Omega^{\sharp}.
\]

From the definitions (see in particular \eqref{f^*}), we deduce that $f^*$, $f_*$, $f^\sharp$ and $f_\sharp$ have the same distribution function as $f$, consequently, rearrangements preserve
$L^{p}$
norms, that is, for all $p\in[1,\infty]$:
\begin{equation*}
\|f\|_{L^{p}(\Omega)}=\|f^{\ast}\|_{L^{p}(0,|\Omega|)}=\|f^{\sharp}\|_{L^{p}(\Omega^{\sharp})}.
\end{equation*}

Furthermore, for any couple of measurable functions $f$ and $g$, the classical Hardy-Littlewood inequality \cite{Hardy} holds true
\begin{equation}
\int_{\Omega}\vert f(x)\,  g(x)  \vert dx\leq\int_{0}^{\left\vert \Omega\right\vert}f^{\ast}(\sigma)\,  g^{\ast}(\sigma)  d\sigma=\int_{\Omega^{\sharp}}f^{\sharp}(x)\,g^{\sharp}(x)\,dx\,;
\label{HardyLit}%
\end{equation}
and
\begin{equation}
\int_{\Omega^{\sharp}}f^{\sharp}(x)\,g_{\sharp}(x)\,dx\,=\int_{0}^{\left\vert \Omega\right\vert}f^{\ast}(\sigma)\,  g_{\ast}(\sigma)  d\sigma \le \int_{\Omega}\vert f(x)\,  g(x)  \vert dx.
\label{HardyLit_basso}%
\end{equation}
Now, we recall a generalization of the \emph{Riesz rearrangement inequality} (see \cite[Theorem 2.2]{ALIEB}).
\begin{proposition}
\label{riesz_generalized}
Let $F:\R^{+}\times\R^{+}\rightarrow\R^{+}$ be a continuous function such that $F(0,0)=0$ and
\begin{equation}\label{F}
F(u_{2},v_{2})+F(u_{1},v_{1})\geq F(u_{2},v_{1})+F(u_{1},v_{2})
\end{equation}
whenever $u_{2}\geq u_{1}>0$ and $v_{2}\geq v_{1}>0$.
Assume that $u, v$ are two nonnegative, measurable functions on $\R^{N}$ satisfying \eqref{distribution}, then
\begin{equation}
\int_{\R^{N}}\int_{\R^{N}}F(u(x),v(y))W(ax+by)\,dydx\leq \int_{\R^{N}}\int_{\R^{N}}F(u^{\sharp}(x),v^{\sharp}(y))W^\sharp(ax+by)\,dydx\label{mainRieszineq}
\end{equation}
for any nonnegative function $W\in L^{1}(\R^{N})$ and any choice of nonzero numbers $a$ and $b$.
\end{proposition}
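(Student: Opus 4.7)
The plan is to reduce \eqref{mainRieszineq} to the classical three-function Riesz--Sobolev rearrangement inequality (the case $F(u,v)=uv$) by means of a layer-cake decomposition of the supermodular kernel $F$. The starting observation is that the hypothesis \eqref{F} is precisely the statement that, for every half-open rectangle $[u_1,u_2)\times[v_1,v_2)\subset\R^+\times\R^+$, the rectangular increment $F(u_2,v_2)-F(u_1,v_2)-F(u_2,v_1)+F(u_1,v_1)$ is nonnegative. Combined with continuity and the normalization $F(0,0)=0$, this identifies a unique nonnegative Borel measure $\sigma$ on $[0,\infty)\times[0,\infty)$ so that
\[
F(u,v)=F(u,0)+F(0,v)+\int_0^\infty\!\int_0^\infty\mathbf 1_{\{s<u\}}\mathbf 1_{\{t<v\}}\,d\sigma(s,t).
\]
When $F\in C^{2}$ this is trivial with $d\sigma=F_{uv}\,ds\,dt\geq 0$; in full generality I would either mollify $F$ by a compactly supported smooth supermodular bump and pass to the limit, or directly use Carath\'eodory's extension from the rectangle pre-measure.

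Next I would dispose of the two boundary terms $F(u(x),0)$ and $F(0,v(y))$. Since $b\neq 0$, Fubini and the change of variable $z=ax+by$ at $x$ fixed give
\[
\int_{\R^N}\!\int_{\R^N}F(u(x),0)\,W(ax+by)\,dy\,dx=|b|^{-N}\|W\|_{L^1(\R^N)}\int_{\R^N}F(u(x),0)\,dx,
\]
and the last integral is invariant under Schwarz rearrangement of $u$ (it depends on $u$ only through $\mu_u$), while $\|W\|_{L^1}=\|W^\sharp\|_{L^1}$. The analogous computation for the $F(0,v)$ piece shows that both these terms contribute \emph{equally} to the two sides of \eqref{mainRieszineq}, so they may be discarded.

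The remaining mixed term, after plugging in the representation of $F$ and using Fubini (all integrands being nonnegative, with finiteness guaranteed by \eqref{distribution} and $W\in L^{1}$), becomes
\[
\int_0^\infty\!\int_0^\infty\left(\int_{\R^N}\!\int_{\R^N}\mathbf 1_{\{u(x)>s\}}\mathbf 1_{\{v(y)>t\}}\,W(ax+by)\,dy\,dx\right)d\sigma(s,t).
\]
For each fixed $s,t>0$ the inner double integral is the classical three-function Riesz functional, and after the affine substitution $x\mapsto x/a$, $y\mapsto y/b$ (using that Schwarz rearrangement commutes with positive dilations, and that $\mathbf 1_{\{u>s\}}^{\sharp}=\mathbf 1_{\{u^{\sharp}>s\}}$ because level sets of $u^\sharp$ are centered balls) the standard Riesz--Sobolev inequality bounds it by the corresponding expression with $u^\sharp,v^\sharp,W^\sharp$. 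Since $d\sigma\geq 0$, integrating this pointwise inequality and adding back the neutral boundary terms yields \eqref{mainRieszineq}.

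The main obstacle is Step 1: turning the purely combinatorial supermodularity \eqref{F} into an honest nonnegative measure representation of $F$ under no regularity hypothesis beyond continuity. Once the decomposition is secured, the rest of the argument is essentially bookkeeping built on top of the classical Riesz--Sobolev inequality, which provides all the actual analytic content.
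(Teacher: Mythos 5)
Your argument is essentially correct, but note first that the paper itself offers no proof of Proposition \ref{riesz_generalized}: it is recalled verbatim from \cite[Theorem 2.2]{ALIEB}, so the only thing to compare against is the cited source, and your proof is in substance a reconstruction of the standard argument there. Supermodularity plus continuity does make $G(u,v)=F(u,v)-F(u,0)-F(0,v)$ the distribution function of a nonnegative bivariate Lebesgue--Stieltjes measure $\sigma$ (the Carath\'eodory route is the clean one and needs nothing beyond continuity; Almgren and Lieb work with the discrete analogue, approximating $F$ by nonnegative combinations of products $\chi_{\{u>s_i\}}\chi_{\{v>t_j\}}$ plus functions of $u$ or $v$ alone), and for each fixed $(s,t)$ the mixed term is exactly the classical Riesz--Sobolev functional for the triple $\chi_{\{u>s\}},\chi_{\{v>t\}},W$, whose level sets have finite measure by \eqref{distribution}. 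Two small points deserve more care than you give them. First, $\int_{\R^N}F(u(x),0)\,dx$ need not be finite (take $F(u,v)=u+uv$ with $u$ satisfying \eqref{distribution} but $u\notin L^1$), so ``discarding'' the boundary terms by cancellation is not legitimate as stated; instead, compare the three nonnegative pieces of each side termwise in $[0,\infty]$: the two boundary pieces coincide on both sides (equimeasurability of $u,u^\sharp$ and $\|W\|_{L^1}=\|W^\sharp\|_{L^1}$) and the mixed pieces are ordered by the Riesz inequality integrated against $d\sigma\geq0$, so \eqref{mainRieszineq} follows with no subtraction at all. Second, since $a,b$ are arbitrary nonzero reals, the affine substitution bringing $W(ax+by)$ into the convolution form $W(x'-y')$ involves a reflection as well as a dilation (e.g. $x'=ax$, $y'=-by$), so you need both $(\lambda E)^\sharp=|\lambda|E^\sharp$ and the fact that $E^\sharp$, being a centred ball, is invariant under $x\mapsto-x$; your parenthetical remark gestures at this, but it should be made explicit. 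With these glosses your proof is complete and agrees in spirit with the proof in the reference the paper relies on.
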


The comparison of mass concentrations \eqref{risultato_confronto} enjoys some nice equivalent formulations
(for the proof we refer to
\cite{Chong}, \cite{ALTa}, \cite{VANS05}).

\begin{proposition}
\label{prop_equiv_conc}
Let $u,v \in L^p(\Omega)$, $p\ge 1$, be two nonnegative function. Then the following are equivalent:

\vskip7pt
\noindent(i)\[
\int_{B_r}u(x)\,dx\leq \int_{B_r}v(x)\,dx\quad \forall r>0,
\]
\vskip7pt

\noindent(ii) for all  nonnegative $\varphi\in L^{p'}(\Omega)$,
$$\int_{\Omega}u(x)\varphi(x)\,dx\leq \int_{\Omega^\sharp}v(x)\varphi^\#(x)\,dx,
$$
\vskip7pt

\noindent(iii) for all convex, nonnegative Lipschitz functions $\Phi:[0,\infty)\rightarrow [0,\infty)$ with $\Phi(0) = 0$ it holds
$$\int_{\Omega}\Phi(u(x))\,dx\leq \int_{\Omega}\Phi(v(x))\,dx.
$$
%
\end{proposition}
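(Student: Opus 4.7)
The three conditions are classical equivalent formulations of the Hardy--Littlewood--P\'olya submajorization between the Schwarz symmetric decreasing rearrangements of $u$ and $v$. The starting observation is that, by equimeasurability, $\int_{B_r}u^{\sharp}\,dx=\int_0^{\omega_N r^N}u^{\ast}(\sigma)\,d\sigma$, and that (iii) depends only on the distribution functions $\mu_u$ and $\mu_v$, so I may reduce to $u=u^{\sharp}$ and $v=v^{\sharp}$; then (i) amounts to the one-dimensional submajorization $\int_0^s u^{\ast}\leq \int_0^s v^{\ast}$ for every $s\in(0,|\Omega|]$. I would prove the cycle (i)$\Rightarrow$(ii)$\Rightarrow$(i) and the equivalence (i)$\Leftrightarrow$(iii) separately.

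\smallskip
\noindent\textbf{Step 1: (i)$\Leftrightarrow$(ii).} For the direct implication, I would decompose the radial test function through the layer-cake identity $\varphi^{\sharp}(x)=\int_0^\infty \chi_{\{\varphi^{\sharp}>t\}}(x)\,dt$, noticing that each superlevel set is a ball $B_{r(t)}$. Fubini and hypothesis (i) then yield
\[
\int_{\Omega^{\sharp}}v\,\varphi^{\sharp}\,dx=\int_0^\infty\!\!\int_{B_{r(t)}}\!v(x)\,dx\,dt\;\geq\;\int_0^\infty\!\!\int_{B_{r(t)}}\!u^{\sharp}(x)\,dx\,dt=\int_{\Omega^{\sharp}}u^{\sharp}\varphi^{\sharp}\,dx,
\]
and the Hardy--Littlewood inequality \eqref{HardyLit} supplies $\int_\Omega u\,\varphi\,dx\leq \int_{\Omega^{\sharp}}u^{\sharp}\varphi^{\sharp}\,dx$, whence (ii) follows by concatenation. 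The converse is obtained by testing (ii) with $\varphi=\chi_{B_r\cap\Omega}$, whose Schwarz rearrangement coincides with $\chi_{B_r}$ when $B_r\subseteq\Omega^{\sharp}$, the larger-$r$ regime being trivial once both sides saturate to $\int_\Omega u$ and $\int_{\Omega^{\sharp}}v$.

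\smallskip
\noindent\textbf{Step 2: (i)$\Leftrightarrow$(iii).} The pivotal ingredient here is the Legendre--Fenchel duality
\[
\int_0^s u^{\ast}(\sigma)\,d\sigma=\inf_{k\geq 0}\Bigl[\int_\Omega(u-k)_+\,dx+sk\Bigr],\qquad \int_\Omega(u-k)_+\,dx=\sup_{s\geq 0}\Bigl[\int_0^s u^{\ast}(\sigma)\,d\sigma-sk\Bigr],
\]
which I would verify by studying the sign of the derivative of the bracketed quantities: the optimum is attained at $k=u^{\ast}(s)$ and $s=\mu_u(k)$ respectively, a statement to be checked with a short case analysis that takes into account flat zones of $u^{\ast}$ or jumps of $\mu_u$. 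Given this pair of identities, (iii)$\Rightarrow$(i) is immediate upon specializing (iii) to $\Phi(t)=(t-k)_+$ (admissible since it is convex, nonnegative, Lipschitz and vanishes at $0$), which produces $\int(u-k)_+\,dx\leq\int(v-k)_+\,dx$, and substituting into the first identity. For (i)$\Rightarrow$(iii), I would invoke the structural representation $\Phi(t)=\alpha t+\int_{(0,\infty)}(t-s)_+\,d\mu(s)$ valid for every convex Lipschitz $\Phi$ with $\Phi(0)=0$ (with $\alpha=\Phi'(0^+)\geq 0$ and $\mu=d\Phi'$ a nonnegative bounded Radon measure). Integrating against $dx$ and swapping order via Fubini reduces the claim $\int\Phi(u)\leq\int\Phi(v)$ to the two inequalities $\int u\,dx\leq\int v\,dx$ (the case $r\to\infty$ in (i)) and $\int(u-s)_+\,dx\leq\int(v-s)_+\,dx$ for every $s\geq 0$, both of which are immediate consequences of the second duality identity together with (i).

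\smallskip
\noindent\textbf{Expected difficulty.} As the proposition is classical, no deep obstacle is anticipated; the role of the preliminary section is just to collect the statement in the form needed later. The only genuine technical care concerns the verification of the Legendre--Fenchel formulae in full generality, i.e.\ when $u^{\ast}$ possesses flat zones or $\mu_u$ admits jumps: a smooth inverse-function argument is not available and one has to rely on the right-continuity of $\mu_u$ and $u^{\ast}$, together with an explicit examination of the two subcases $\mu_u(k)\lessgtr s$. A secondary bookkeeping item is the interplay between $B_r$, $\Omega$ and $\Omega^{\sharp}$ in Step 1, resolved by extending $u$ and $v$ by zero outside their respective domains.
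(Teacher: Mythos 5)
The paper does not actually prove Proposition \ref{prop_equiv_conc}: it is quoted from the literature (\cite{Chong}, \cite{ALTa}, \cite{VANS05}), so there is no internal argument to compare yours with. Your proof is, in substance, the standard one found in those references: (i)$\Leftrightarrow$(ii) via the Hardy--Littlewood inequality \eqref{HardyLit} combined with the layer-cake decomposition of the radial test function (whose superlevel sets are balls), and (i)$\Leftrightarrow$(iii) via the duality between $s\mapsto\int_0^s u^{\ast}(\sigma)\,d\sigma$ and $k\mapsto\int_\Omega(u-k)_+\,dx$, together with the representation of a convex, Lipschitz $\Phi$ with $\Phi(0)=0$ as a superposition of hinge functions $(t-k)_+$. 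The duality identities you state are correct as written: the optimal choices $k=u^{\ast}(s)$ and $s=\mu_u(k)$ work precisely because $u^{\ast}\equiv u^{\ast}(s)$ on $[\mu_u(u^{\ast}(s)),s]$, which handles flat zones and jumps, exactly the technical point you flagged.

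One point deserves care. Your opening ``reduction to $u=u^{\sharp}$, $v=v^{\sharp}$'' is not a neutral normalization: (iii) is rearrangement-invariant, but (i) and (ii) are not, and for arbitrary nonnegative $u,v\in L^p(\Omega)$ the literal equivalence fails. For instance, on $\Omega=(-2,2)$ take $u=2\chi_{(1,2)}$ and $v=\chi_{(-2,2)}$: (i) holds for every $r$, while (iii) fails for $\Phi(t)=(t-1)_+$ and (ii) fails for $\varphi=\chi_{(1,2)}$. The proposition is meant, and is only ever applied in the paper, with $u$ replaced by $u^{\sharp}$ and $v$ radially decreasing (this is also how the cited sources state it); under that reading your argument is complete, but you should say explicitly that you are adopting this reading rather than presenting it as a consequence of invariance of all three statements. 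A second, minor slip: $(\chi_{B_r\cap\Omega})^{\sharp}=\chi_{B_\rho}$ with $\omega_N\rho^N=|B_r\cap\Omega|$, which coincides with $\chi_{B_r}$ only when $B_r\subseteq\Omega$, not when $B_r\subseteq\Omega^{\sharp}$; the implication (ii)$\Rightarrow$(i) still goes through because $\rho\le r$ and $v\ge0$ give $\int_{B_\rho}v\,dx\le\int_{B_r}v\,dx$.
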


From this Lemma it easily follows that if (i) holds,
then
\begin{equation*}
\|f\|_{L^{p}(\Omega)}\leq \|g\|_{L^{p}(\Omega)}\quad \forall p\in[1,\infty].
\end{equation*}

\subsection{The nonlocal problems}
\label{sec_the_nonlocl_pb}
In this Section, we recollect some definitions and properties of the nonlocal problems we are dealing with.

Assume that the nonnegative kernel $K$ satisfies the assumptions \eqref{assumption1}-\eqref{assumption2}-\eqref{assumption3}.
From \cite{felsinger2015dirichlet}, we recall the following definitions and properties of the related functional spaces. We define the Sobolev space $H(\R^N,K)$ as
\[
H(\R^N,K)=\left\{u\in L^{2}(\R^{N}) \text{ such that }[u]_{H(\R^N,K)}<\infty\right\}
\]
where $[u]_{H(\R^N,K)}$ is the \emph{$K$-Gagliardo seminorm of $u$}, that is
\[
[u]_{H(\R^N,K)}=\left(\int_{\R^N}\int_{\R^N}|u(x)-u(y)|^{2} K(x,y)\,dydx\right)^{1/2}.
\]
The space $H(\R^N, K)$ is equipped with the norm 
\[
||u||_{H_\Omega (\R^N, K)}=\left(||u||^2_{L^2(\Omega)}+[u]^2_{H_\Omega (\R^N, K)}\right)^\frac 12.
\]
Moreover, we define the subspace $H_\Omega(\R^N,K)$ of $H(\R^N,K)$ encoding the exterior homogeneous Dirichlet condition for $\Omega$, namely defined by means of
\[
H_\Omega(\R^N,K)=\{ u \in H(\R^N,K)\ : \ u=0 \text{ a.e. } \R^N\setminus\Omega\}.
\]

Under assumptions \eqref{assumption1}-\eqref{assumption2}-\eqref{assumption3}, \cite[Lemma 2.9]{felsinger2015dirichlet} guarantees that the following Poincar\'e-Friedrichs inequality holds:
\begin{equation}\label{PoincFred}
||u||^2_{L^2(\R^N)}\leq C\int_{\R^{N}}\int_{\R^{N}}K(x,y) \left(u(x)-u(y)\right)^2dx\,dy
\end{equation}
where $C>0$ is a proper constant, for all $u\in H_{\Omega}(\R^N, K)$. This allows to equip $H_{\Omega}(\R^N, K)$ of the equivalent norm
\[
||u||_{H_\Omega (\R^N, K)}=[u]_{H_\Omega (\R^N, K)}.
\]
Then the operator $\mathcal{L}$ is defined by duality on the space $H_\Omega (\R^N, K)$ by the identity
\[
\langle \mathcal{L}u,\varphi\rangle=\frac{1}{2}\int_{\R^{N}}\int_{\R^{N}}K(x,y)\left(u(x)-u(y)\right)\left(\varphi(x)-\varphi(y)\right)dydx,
\]
which defines $\mathcal{L}$ as a linear, continuous operator from $H_\Omega (\R^N, K)$ to its dual $(H_\Omega (\R^N, K))^{*}$.


Once this notation fixed, we are able to define the solutions of problem problem \eqref{problem1}.
\begin{definition}
Let $K$ be a nonnegative measurable function satisfying \eqref{assumption1} and \eqref{assumption2}. For all $u,\varphi \in H_{\Omega}(\R^N, K)$, we define 
\begin{equation*}
\mathbb E(u,\varphi;K,c):  =  \frac{1}{2}\int_{\R^{N}}\int_{\R^{N}}K(x,y)\left(u(x)-u(y)\right)\left(\varphi(x)-\varphi(y)\right)dydx+\int_{\Omega} c(x)u(x)\varphi(x)dx.
\end{equation*}
Then $u\in H_\Omega(\R^N,K)$ is called a weak solution of \eqref{problem1} if
\begin{equation}
\label{weak_formulation}
\mathbb E(u,\varphi;K,c)=\int_{\Omega} f(x)\varphi(x)dx
\end{equation}
for all $\varphi \in H_\Omega(\R^N,K)$.
\end{definition}

The existence and uniqueness of the solutions of problem \eqref{problem1} have been extensively studied in many papers; particularly, we will refer to \cite[Proposition 3.4]{felsinger2015dirichlet}.

\begin{proposition}
Let $K$ be a nonnegative measurable function satisying \eqref{assumption1} and \eqref{assumption2}, $c\in L^\infty (\Omega)$ a nonnegative function, $f\in L^2(\Omega)$. Then there exists a unique weak solution $u \in H_{\Omega}(\R^N, K)$ to \eqref{problem1}. Furthermore, $u$ is characterized by the property:
\begin{equation}
    \label{char_min}
\frac{1}{2}\mathbb E(u,u;K,c)-\int_{\Omega} f(x)u(x)dx
=\min_{v \in H_{\Omega}(\R^N, K)}\left\{\frac{1}{2}\mathbb E(v,v;K,c)-\int_{\Omega} f(x)v(x)dx\right\}.
\end{equation}
\end{proposition}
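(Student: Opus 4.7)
The plan is to deduce existence and uniqueness via a direct Lax-Milgram argument on the Hilbert space $H_\Omega(\R^N,K)$, and then obtain the variational characterization \eqref{char_min} from the symmetry of the bilinear form.

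First I would verify that $\mathbb{E}(\cdot,\cdot;K,c)$ is a well-defined, symmetric, continuous bilinear form on $H_\Omega(\R^N,K)\times H_\Omega(\R^N,K)$. Symmetry in the nonlocal term follows from $K(x,y)=K(y,x)$ and the algebraic identity $(u(x)-u(y))(\varphi(x)-\varphi(y))=(\varphi(x)-\varphi(y))(u(x)-u(y))$, while the zero-order term is obviously symmetric. Continuity of the nonlocal term is a Cauchy-Schwarz estimate with respect to the measure $K(x,y)\,dy\,dx$ on $\R^N\times\R^N$, giving
\[
\Bigl|\tfrac12\!\int_{\R^N}\!\!\int_{\R^N}\!K(x,y)(u(x)-u(y))(\varphi(x)-\varphi(y))\,dy\,dx\Bigr|\le\tfrac12[u]_{H(\R^N,K)}[\varphi]_{H(\R^N,K)},
\]
and for the lower-order term the bound $\|c\|_{L^\infty}\|u\|_{L^2(\Omega)}\|\varphi\|_{L^2(\Omega)}$ is controlled by Poincaré--Friedrichs \eqref{PoincFred}.

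Next I would check coercivity. Since $c\ge 0$ a.e., one has $\mathbb{E}(u,u;K,c)\ge\tfrac12[u]_{H(\R^N,K)}^2$, and by \eqref{PoincFred} the seminorm $[\,\cdot\,]_{H_\Omega(\R^N,K)}$ is an equivalent norm on $H_\Omega(\R^N,K)$; hence $\mathbb{E}(u,u;K,c)\ge \alpha\|u\|_{H_\Omega(\R^N,K)}^2$ for some $\alpha>0$. The map $\varphi\mapsto\int_\Omega f\varphi\,dx$ is a continuous linear functional on $H_\Omega(\R^N,K)$ by Cauchy--Schwarz and \eqref{PoincFred}. Lax--Milgram then yields a unique $u\in H_\Omega(\R^N,K)$ satisfying \eqref{weak_formulation}.

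For the variational characterization I would use the symmetry of $\mathbb{E}$: the functional $J(v):=\tfrac12\mathbb{E}(v,v;K,c)-\int_\Omega f v\,dx$ is strictly convex (by coercivity of the quadratic part) and continuous, so any stationary point is its unique global minimizer. A direct computation of the Gâteaux derivative, using bilinearity and symmetry, gives $\langle J'(u),\varphi\rangle=\mathbb{E}(u,\varphi;K,c)-\int_\Omega f\varphi\,dx$, so $J'(u)=0$ is exactly the weak formulation \eqref{weak_formulation}; equivalently, for any $w\in H_\Omega(\R^N,K)$ and $t\in\R$ the identity
\[
J(u+tw)-J(u)=t\bigl(\mathbb{E}(u,w;K,c)-\textstyle\int_\Omega fw\,dx\bigr)+\tfrac{t^2}{2}\mathbb{E}(w,w;K,c)
\]
shows at once that $u$ solves \eqref{weak_formulation} if and only if $u$ realizes the minimum in \eqref{char_min}. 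No step is really the main obstacle here — the only substantive input, beyond routine Hilbert space arguments, is the Poincaré--Friedrichs inequality \eqref{PoincFred} furnished by \cite{felsinger2015dirichlet}, which promotes the seminorm $[\,\cdot\,]_{H_\Omega(\R^N,K)}$ to a genuine norm and secures coercivity.
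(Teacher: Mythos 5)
Your proposal is correct and follows essentially the same route as the paper: continuity and coercivity of $\mathbb E$ (the latter via the Poincar\'e--Friedrichs inequality \eqref{PoincFred} and $c\ge 0$), Lax--Milgram for existence and uniqueness, and the symmetry of the bilinear form for the minimization characterization \eqref{char_min}. Your explicit expansion $J(u+tw)-J(u)$ merely spells out what the paper obtains by invoking the symmetric form of the Lax--Milgram lemma, so there is no substantive difference.
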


\begin{proof}
It is easily seen that
\[
\mathbb E(u,\varphi;K,c)\leq C_{1} [u]_{H_\Omega(\R^N,K)}\, [\varphi]_{H_\Omega(\R^N,K)}
\]
for a proper positive constant $C_1$. On the other hand, by using the Poincar\'e-Friedrichs inequality \eqref{PoincFred}, we have
\[
\begin{split}
\mathbb E(u,u; K,c)
\geq \frac C2 [u]^2_{H_\Omega(\R^N,K)}.
\end{split}
\]
Therefore by Lax-Milgram Lemma, there is a unique $u\in H_{\Omega}(\R^N,K)$ such that
\[
\mathbb E(u,\varphi;K,c)=\int_{\Omega} f(x)\varphi(x)dx\qquad \forall \varphi \in H_{\Omega}(\R^N,K),
\]
that is the weak formulation \eqref{weak_formulation}. Finally, observing that $\mathbb E(u,\varphi; K,c)$ is symmetric, then Lax-Milgram Lemma also implies \eqref{char_min}.
\end{proof}

Furthermore, the following weak maximum principle is satisfied.
\begin{proposition}\label{prop_weak_max_p}
Let $K$ be a nonnegative measurable function satisying \eqref{assumption1} and \eqref{assumption2}, $c\in L^\infty(\Omega)$ a nonnegative function, $f\in L^2(\Omega)$ and $u \in H_{\Omega}(\R^N, K)$ satisfying
\begin{equation*}
\begin{cases}
\mathcal{L}u+cu=f\ & \text{in }\Omega,\\
u=0 & \text{on }\mathbb R^n \setminus \Omega.
\end{cases}
\end{equation*}
If $f\leq 0$ in $\Omega$, 
then $u \leq 0$ in $\Omega$.
\end{proposition}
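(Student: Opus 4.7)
The plan is to test the weak formulation \eqref{weak_formulation} against the positive part $\varphi = u^+ := \max(u,0)$ and show that both contributions on the left-hand side are nonnegative, while the right-hand side is nonpositive, so $u^+ \equiv 0$.

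First I would verify that $u^+$ is an admissible test function, i.e., $u^+ \in H_\Omega(\R^N, K)$. Since $u \in H_\Omega(\R^N, K)$ vanishes a.e.\ on $\R^N \setminus \Omega$, the same holds for $u^+$; moreover $|u^+| \le |u|$ gives $u^+ \in L^2(\R^N)$, and the $1$-Lipschitz property of $t \mapsto t^+$ yields the pointwise bound $|u^+(x)-u^+(y)| \le |u(x)-u(y)|$, so $[u^+]_{H(\R^N,K)} \le [u]_{H(\R^N,K)} < \infty$.

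The key algebraic inequality is that, for every $a,b \in \R$,
\[
(a-b)(a^+-b^+) \ge (a^+-b^+)^2,
\]
which is checked by case analysis on the signs of $a,b$ (the inequality is an equality when $a,b$ have the same sign, and strict otherwise). Inserting $\varphi=u^+$ into \eqref{weak_formulation} and using this pointwise inequality in the double integral, together with the identity $u(x)u^+(x)=(u^+(x))^2$ valid pointwise, yields
\[
\frac{1}{2}\int_{\R^N}\!\!\int_{\R^N} K(x,y)\bigl(u^+(x)-u^+(y)\bigr)^2 dy\,dx \;+\; \int_\Omega c(x)(u^+(x))^2\,dx \;\le\; \int_\Omega f(x)\,u^+(x)\,dx.
\]

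Since $c \ge 0$ and $f \le 0$ while $u^+ \ge 0$, the left-hand side is nonnegative and the right-hand side is nonpositive, so both sides vanish. In particular $[u^+]_{H(\R^N,K)}=0$, and the Poincar\'e--Friedrichs inequality \eqref{PoincFred} applied to $u^+ \in H_\Omega(\R^N,K)$ forces $u^+ \equiv 0$ in $\R^N$, i.e., $u \le 0$ in $\Omega$. The only genuinely subtle points are the verification that $u^+$ lies in $H_\Omega(\R^N,K)$ and the pointwise inequality above; once these are in hand, the conclusion is immediate from Poincar\'e--Friedrichs.
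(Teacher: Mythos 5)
Your proof is correct and follows essentially the same route as the paper: test the weak formulation with $\varphi=u^+$, observe that the kernel term and the zero-order term are nonnegative while the right-hand side is nonpositive, and conclude $u^+\equiv 0$. Your pointwise inequality $(a-b)(a^+-b^+)\ge (a^+-b^+)^2$ is just a sharpened packaging of the paper's case analysis, and your explicit use of the Poincar\'e--Friedrichs inequality (plus the check that $u^+\in H_\Omega(\R^N,K)$) merely makes the paper's terse final step fully rigorous.
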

\begin{proof}
By choosing
\[\varphi(x)=u^+(x)=
\begin{cases}
u(x)\quad & \text{if }u(x)> 0\\
0 & \text{if }u(x)\leq 0,
\end{cases}\]
in the weak formulation \eqref{weak_formulation}, we have
\begin{equation}
\label{w_max_0}
\begin{split}
\frac 12\int_{\R^{N}}\int_{\R^{N}}K(x,y)\left(u(x)-u(y)\right) & \left(u^+(x)-u^+(y)\right)dydx\\
&+\int_{\Omega} c(x)u(x)u^+(x)dx=\int_{\Omega}f(x)u^+(x)\leq 0.
\end{split}
\end{equation}
We observe that each  term in the l.h.s of \eqref{w_max_0} is nonnegative. Indeed, regarding the first integrand term we observe that
\[
\left(u(x)-u(y)\right)\left(u^+(x)-u^+(y)\right)=
\begin{cases}
\left(u(y)-u(x)\right)^2\quad &\text{if } u(x)\ge 0  \text{ and } u(y)\ge 0,\\
\left(u(x)-u(y)\right)u(x)\quad &\text{if } u(x)\ge 0  \text{ and } u(y)\le 0,\\
\left(u(y)-u(x)\right)u(y)\quad &\text{if } u(x)\le 0  \text{ and } u(y)\ge 0,\\
0 &\text{if } u(x)\le 0  \text{ and } u(y)\le 0;
\end{cases}
\]
the second integrand term is also nonnegative since can be written as $\int_{u\geq 0}c(x)u^2(x)dx$.

Therefore, the l.h.s of \eqref{w_max_0} is null and hence $u^+=0$.
\end{proof}

In order to prove a P\'olya-Szeg\H{o} type inequality, the following exponential integrability of the involved kernel will turn out essential. 

\begin{lemma}\label{pesoL1}
Let $J$ be a nonnegative measurable function such that 
\begin{equation}
\label{int_peso_L1}
x\mapsto \int_{\R^N}  J(x-y)\min\{|x-y|^2,1\}dy\in L_{loc}^1(\R^N).
\end{equation}
Then \begin{equation}
\label{peso_L1}
e^{-\frac{t}{J(x)}}\in L^1(\R^N) \qquad \forall t\geq 0.
\end{equation}
\end{lemma}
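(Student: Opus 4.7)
The plan is as follows. First I would unpack the hypothesis \eqref{int_peso_L1}. Since $J(x-y)\min\{|x-y|^2,1\}$ depends on $x$ and $y$ only through their difference, the change of variable $z=x-y$ shows that $\int_{\R^N}J(x-y)\min\{|x-y|^2,1\}\,dy$ is the same constant for every $x$. Thus the $L^1_{loc}$ condition is equivalent to the global L\'evy-type finiteness
$$\int_{\R^N}J(z)\min\{|z|^2,1\}\,dz<\infty,$$
and splitting at $|z|=1$ gives the two ingredients that will be used in the sequel:
$$\int_{B_1}J(z)\,|z|^2\,dz<\infty\qquad\text{and}\qquad\int_{\R^N\setminus B_1}J(z)\,dz<\infty.$$

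Next, to prove \eqref{peso_L1} for $t>0$ (with the natural convention $e^{-t/J(x)}=0$ at points where $J(x)=0$), I decompose
$$\int_{\R^N}e^{-t/J(x)}\,dx=\int_{B_1}e^{-t/J(x)}\,dx+\int_{\R^N\setminus B_1}e^{-t/J(x)}\,dx.$$
The compact piece is disposed of by the trivial pointwise bound $e^{-t/J(x)}\le 1$ together with $|B_1|<\infty$; notice that no quantitative information about $J$ on $B_1$ is needed, which is convenient because $J$ is allowed to have an arbitrary singularity at the origin. For the tail I would invoke the elementary inequality $s\,e^{-s}\le 1/e$ for $s>0$ (the maximum of $s\mapsto s\,e^{-s}$ is attained at $s=1$), applied with $s=t/J(x)$, to get the pointwise estimate
$$e^{-t/J(x)}\leq \frac{J(x)}{e\,t}\qquad\text{a.e.\ on }\{J>0\}.$$
Integrating this over $\R^N\setminus B_1$ and using the tail control $\int_{\R^N\setminus B_1}J(x)\,dx<\infty$ closes the argument.

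There is essentially no obstacle: the proof is just the combination of (i) translation invariance, which turns the $L^1_{loc}$ hypothesis into the global Lévy finiteness, and (ii) a compact/tail split in which the fine behavior of $J$ near the origin never enters, being absorbed into the trivial bound $e^{-t/J(x)}\le 1$ on the bounded region $B_1$, while the tail is controlled by the one-line exponential estimate $s\,e^{-s}\le 1/e$.
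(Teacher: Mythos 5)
Your proof is correct, but it follows a genuinely different and more elementary route than the paper. The paper first transfers the problem to the rearranged kernel: it notes that $\|e^{-t/J}\|_{L^1(\R^N)}=\|e^{-t/J^{\sharp}}\|_{L^1(\R^N)}$ and that $J^{\sharp}$ inherits the L\'evy integrability, then exploits the radial monotonicity of $J^{\sharp}$ to get the pointwise decay $\mathsf{j}(\rho)\le C(\rho^{N}-R^{N})^{-1}\int_{B_R^c}J^{\sharp}$ for the radial profile, whence $e^{-t/J^{\sharp}(x)}\le e^{-C' t(|x|^{N}-R^{N})}$ at infinity, which is integrable. You instead work directly with $J$: translation invariance turns the $L^1_{loc}$ hypothesis into $\int_{\R^N}J(z)\min\{|z|^2,1\}\,dz<\infty$ (the paper makes the same observation, but for $J^{\sharp}$), and on the tail you use $s e^{-s}\le 1/e$ to get $e^{-t/J(x)}\le J(x)/(et)$, so that $\int_{B_1^c}e^{-t/J}\le \frac{1}{et}\int_{B_1^c}J<\infty$, while on $B_1$ the bound $e^{-t/J}\le 1$ suffices. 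Your argument avoids rearrangement and monotonicity considerations altogether and yields the explicit quantitative bound $\|e^{-t/J}\|_{L^1}\le |B_1|+\frac{1}{et}\int_{B_1^c}J$, which holds for any kernel with integrable tail; what it does not give is the pointwise decay estimate for $J^{\sharp}$ that the paper's proof produces as a by-product (though the paper only reuses later the finiteness of $\int J^{\sharp}(y)\min\{|y|^2,1\}\,dy$, which your reduction also provides). One shared caveat: both arguments really prove the claim for $t>0$ only --- at $t=0$ the function equals $1$ on $\{J>0\}$ and is generally not integrable --- and $t>0$ is all that is used in the layer-cake representation $J=\int_0^\infty e^{-t/J}\,dt$, so your explicit restriction to $t>0$ with the convention $e^{-t/J}=0$ where $J=0$ is the right reading of the statement.
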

\begin{proof}
Firstly, let us observe that the function $\min\{|x|^2,1\}$ is radially increasing. This means that $J^\sharp$ also satisfies \eqref{int_peso_L1}, in view of  
\[
\int_{\R^N}  J(x-y)\min\{|x-y|^2,1\}dy\geq\int_{\R^N}  J^\sharp(x-y)\min\{|x-y|^2,1\}dy.
\]
Therefore, it is sufficient to prove \eqref{peso_L1} when $J$ is replaced by $J^\sharp$, since
\begin{equation}
    \label{exp_norm_preserving}
\|e^{-\frac{t}{J^{\sharp}(\cdot)}}\|_{L^{1}(\R^{N})}=
\|e^{-\frac{t}{J(\cdot)}}\|_{L^{1}(\R^{N})}.
\end{equation}

Let us observe that $e^{-\frac{t}{J^\sharp(x)}}\leq 1$ for any $x\in\R^N$ and $t\geq 0$, that implies $e^{-\frac{t}{J^\sharp(x)}}\in L^1(B_r)$ for any $r>0$. Hence it remains to prove that $e^{-\frac{t}{J^\sharp(x)}}\in L^1(B_r^c)$.

By assumption, we deduce that $\int_{\R^N}  J^\sharp(x-y)\min\{|x-y|^2,1\}dy<+\infty$ for a.e. $x \in\R^N$;  since this quantity is independent by $x$, we have that $\int_{\R^N}  J^\sharp(y)\min\{|y|^2,1\}dy<+\infty$ and hence, for any $R$ large enough, we have
\[
\int_{B_R^c}  J^\sharp(y)dy<+\infty.
\]
Calling $\mathsf{j}$ the radial profile of $J^{\sharp}$, the radial monotonicity of $J^{\sharp}$ gives, for $\rho=|x|>R$,
\[
\int_{R}^{\rho}r^{N-1}\mathsf{j}(r)dr\geq \mathsf{j}(\rho)\,\frac{\rho^{N}-R^{N}}{N}
\]
that is 
\[
\mathsf{j}(\rho)\leq \frac{C}{\rho^{N}-R^{N}}\int_{B_R^c}  J^{\sharp}(y)dy
\]
thus, for some positive constant $C^{\prime}$
\[
e^{-\frac{t}{J^{\#}(x)}}\leq e^{-C^{\prime}t(|x|^N-R^{N})}\qquad\text{ as }|x|\rightarrow+\infty.
\]
Therefore, it remains proven that $e^{-\frac{t}{J^{\sharp}(x)}}\in L^1(B_{R}^c)$ and hence, by \eqref{exp_norm_preserving},
we get the conclusion.
\end{proof}

Now, we state a P\'olya-Szeg\H{o} type inequality, that will be useful in deriving suitable an energy estimates.
 
\begin{proposition}\label{prop_polya}
Let $K,J$ be two nonnegative measurable functions satisfing the assumptions \eqref{assumption1}, \eqref{assumption2}, \eqref{assumption3}. Then, for any $u\in H_{\Omega}(\R^N,K)$, we have that $u^\sharp\in H_{\Omega^\sharp}(\R^N,J^\sharp)$ and
\begin{equation}
\label{K>J*_stima_energia} 
\int_{\R^{N}}\int_{\R^{N}}K(x,y) \left(u(x)-u(y)\right)^2dydx\geq \int_{\R^{N}}\int_{\R^{N}}J^\sharp(x-y) \left(u^\sharp(x)-u^\sharp(y)\right)^2dydx
\end{equation}   
\end{proposition}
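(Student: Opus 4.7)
The plan is to combine the pointwise lower bound $K(x,y)\geq J(x-y)$ with a nonlocal P\'olya-Szeg\H{o} type inequality involving only the kernel $J$. Since $(u(x)-u(y))^2\geq (|u(x)|-|u(y)|)^2$ and $|u|^\sharp=u^\sharp$, I may assume $u\geq 0$; the assumption \eqref{assumption3} then reduces the proof to establishing
\[
\int_{\R^N}\int_{\R^N}J(x-y)(u(x)-u(y))^2\,dxdy\;\geq\;\int_{\R^N}\int_{\R^N}J^\sharp(x-y)(u^\sharp(x)-u^\sharp(y))^2\,dxdy,
\]
after which I may further replace $J(z)$ by $\tfrac12(J(z)+J(-z))$ (which leaves the left-hand side invariant by an $x\leftrightarrow y$ swap) and so assume $J$ is even.

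The main obstacle is that $J$ need not lie in $L^{1}(\R^{N})$, so the naive expansion $(u(x)-u(y))^2=u(x)^{2}+u(y)^{2}-2u(x)u(y)$ would create a formal $\infty-\infty$ cancellation. I would handle this by truncating off the singularity: for $\varepsilon\in(0,1)$, set $J_{\varepsilon}(z):=J(z)\chi_{\{|z|>\varepsilon\}}$. Assumption \eqref{assumption2} gives $\|J_{\varepsilon}\|_{L^{1}}\leq \varepsilon^{-2}\int J(z)\min(|z|^{2},1)\,dz<\infty$, so $J_{\varepsilon}\in L^{1}(\R^{N})$ and the expansion becomes legitimate. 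Since the Schwarz rearrangement preserves the $L^{1}$-norm of $J_{\varepsilon}$ and the $L^{2}$-norm of $u$, the diagonal contributions $2\|J_{\varepsilon}\|_{L^{1}}\|u\|_{L^{2}}^{2}$ cancel out on the two sides, reducing the truncated inequality to
\[
\int_{\R^{N}}\int_{\R^{N}}J_{\varepsilon}(x-y)u(x)u(y)\,dxdy\;\leq\;\int_{\R^{N}}\int_{\R^{N}}(J_{\varepsilon})^{\sharp}(x-y)u^{\sharp}(x)u^{\sharp}(y)\,dxdy,
\]
which is precisely Proposition \ref{riesz_generalized} with the supermodular choice $F(\xi,\eta)=\xi\eta$, $a=1$, $b=-1$, and the admissible weight $W=J_{\varepsilon}\in L^{1}(\R^{N})$.

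Finally, I would pass to the limit $\varepsilon\to 0^{+}$. The pointwise monotone convergence $J_{\varepsilon}\nearrow J$ on $\R^{N}\setminus\{0\}$ implies $\mu_{J_{\varepsilon}}\nearrow \mu_{J}$, whence $(J_{\varepsilon})^{\sharp}\nearrow J^{\sharp}$ pointwise; two applications of the monotone convergence theorem then yield the full inequality \eqref{K>J*_stima_energia}. The finiteness of the left-hand side, guaranteed by $u\in H_{\Omega}(\R^{N},K)$, forces the right-hand side to be finite as well, giving the inclusion $u^{\sharp}\in H_{\Omega^{\sharp}}(\R^{N},J^{\sharp})$. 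The truly delicate point in this plan is that $(J_{\varepsilon})^{\sharp}$ is \emph{not} the same function as the obvious truncation $J^{\sharp}\chi_{\{|z|>\varepsilon\}}$, so one must invoke the monotonicity of the Schwarz rearrangement under pointwise monotone limits to recover $J^{\sharp}$ in the limit without losing the inequality.
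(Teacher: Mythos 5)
Your argument is correct in substance, but it takes a genuinely different route from the paper. The paper also reduces, via \eqref{assumption3}, to a P\'olya--Szeg\H{o} inequality for the kernel $J$ alone and then invokes Proposition \ref{riesz_generalized} with $F(\xi,\eta)=\xi\eta$; however, it handles the non-integrability of $J$ through the exponential layer-cake representation of \cite[Section 9]{ALIEB}, writing $J(z)=\int_0^{+\infty}e^{-t/J(z)}\,dt$ and applying the Riesz inequality slice-wise in $t$ to the $L^1$ kernels $W_t=e^{-t/J(\cdot)}$ (this is exactly why Lemma \ref{pesoL1} is proved), using $W_t^\sharp=e^{-t/J^\sharp(\cdot)}$. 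You instead truncate near the singularity, $J_\varepsilon=J\chi_{\{|z|>\varepsilon\}}$, which is in $L^1$ directly by the L\'evy-type integrability coming from \eqref{assumption2}--\eqref{assumption3}, cancel the diagonal terms, apply the same Riesz inequality, and then pass to the limit using the fact that $J_\varepsilon\nearrow J$ forces $(J_\varepsilon)^\sharp\nearrow J^\sharp$, followed by monotone convergence on both sides; you correctly isolate this rearrangement-convergence step as the delicate point, and it is a standard fact (monotone limits commute with the distribution function and hence with the decreasing rearrangement). Your scheme buys a more elementary approximation argument that avoids the exponential representation and the auxiliary Lemma \ref{pesoL1} altogether; the paper's scheme buys an exact identity (no limiting procedure) and reuses machinery needed elsewhere in the proof of the main theorem.

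One remark should be deleted or justified: the proposed reduction ``replace $J(z)$ by $\tfrac12\left(J(z)+J(-z)\right)$ and assume $J$ even'' is not a legitimate reduction as stated, because the right-hand side of \eqref{K>J*_stima_energia} involves the Schwarz rearrangement of the \emph{given} $J$, and the rearrangement of the even part is a different function with no pointwise or integral comparison to $J^\sharp$ in general. Fortunately nothing in your subsequent argument uses evenness (neither the $L^1$ bound on $J_\varepsilon$, nor the computation of the diagonal terms, nor Proposition \ref{riesz_generalized}, which requires no symmetry of $W$), so simply dropping that sentence leaves a complete proof.
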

\begin{proof}
Let us observe that by \eqref{assumption3}, we directly have
\begin{equation*}
\int_{\R^{N}}\int_{\R^{N}}K(x,y)\left(u(x)-u(y)\right)^2 dydx\ge\int_{\R^{N}}\int_{\R^{N}}J(x-y)\left(u(x)-u(y)\right)^2 dydx.
\end{equation*}
Using \eqref{assumption3} and the arguments in \cite[Section 9]{ALIEB}, we have
\begin{equation}\label{summability_representation_energy} \begin{split}
&\int_{\R^{N}}\int_{\R^{N}}J(x-y)\left(u(x)-u(y)\right)^2 dydx=\int_{0}^{+\infty}\int_{\R^{N}}\int_{\R^{N}}e^{-\frac t{J(x-y)}}\left(u(x)-u(y)\right)^2 dydxdt\\
&=\int_{0}^{+\infty}\int_{\R^{N}}\int_{\R^{N}}e^{-\frac t{J(x-y)}}u^2(x)dydxdt+\int_{0}^{+\infty}\int_{\R^{N}}\int_{\R^{N}}e^{-\frac t{J(x-y)}}u^2(y) dydxdt\\
&\qquad-2\int_{0}^{+\infty}\int_{\R^{N}}\int_{\R^{N}}e^{-\frac t{J(x-y)}}u(x)u(y) dydxdt.
\end{split}\end{equation}
We observe that, in view of Lemma \ref{pesoL1}, the function $e^{-\frac{t}{J(x)}}$ belongs to $ L^1(\R^N)$ for any $t> 0$. This means that we can use inequality \eqref{mainRieszineq} of Proposition \ref{riesz_generalized}, with $F(u,v)=uv$, $a=1$, $b=-1$ and $W(ax+by)=e^{-\frac t{J(x-y)}}$. Observing that $W^\sharp(x)=e^{-\frac t{J^\sharp(x)}}$, for the last term in \eqref{summability_representation_energy} it holds
\begin{equation*}
\int_0^{+\infty}\int_{\R^{N}}\int_{\R^{N}}e^{-\frac t{J(x-y)}}u(x)u(y)dy\,dx\,dt
\leq \int_0^{+\infty}\int_{\R^{N}}\int_{\R^{N}}e^{-\frac t{J^\sharp(x-y)}}u^{\sharp}(x)u^{\sharp}(y)dy\,dx\, dt.
\end{equation*}
Using the same properties, we have that the first two term on the r.h.s. of \eqref{summability_representation_energy} are invariant under rearrangement 
and we get the conclusion \eqref{K>J*_stima_energia}.
\end{proof}

Moreover, we also treat the symmetrized  problem \eqref{problem2}, involving the nonlocal operator \eqref{operator*}.
We observe that the uniqueness of the weak solution of problems in the form \eqref{problem1} implies, for example, that the weak solution of the symmetrized problem \eqref{problem2} has to be radially symmetric. Actually, we can prove the following.

\begin{proposition}
Let $v\in H_{\Omega^\sharp}(\R^N, J^\sharp)$ be the weak solution to problem \eqref{problem2}, then $v=v^\sharp$.
\end{proposition}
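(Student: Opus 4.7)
The plan is to invoke the variational characterization \eqref{char_min}: the function $v$ is the unique minimizer on $H_{\Omega^\sharp}(\R^N, J^\sharp)$ of the functional
\[
\mathcal F(w) := \tfrac{1}{2}\mathbb E(w,w; J^\sharp, c_\sharp) - \int_{\Omega^\sharp} f^\sharp\, w\,dx.
\]
Accordingly, it will be enough to show $\mathcal F(v^\sharp) \leq \mathcal F(v)$, because then $v^\sharp$ is a minimizer as well and uniqueness forces $v = v^\sharp$.

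The required inequality rests on three ingredients, to be combined term by term. First, I would apply Proposition \ref{prop_polya} with $K = J^\sharp$: this both ensures that $v^\sharp \in H_{\Omega^\sharp}(\R^N, J^\sharp)$ and delivers the nonlocal P\'olya-Szeg\H o inequality
\[
[v^\sharp]_{H(\R^N, J^\sharp)}^2 \leq [v]_{H(\R^N, J^\sharp)}^2,
\]
handling the quadratic Gagliardo part. Second, I would invoke the classical Hardy-Littlewood inequality \eqref{HardyLit}, which, combined with $(f^\sharp)^\sharp = f^\sharp$ and $f^\sharp \geq 0$, yields
\[
\int_{\Omega^\sharp} f^\sharp\, v\,dx \leq \int_{\Omega^\sharp} f^\sharp\, |v|\,dx \leq \int_{\Omega^\sharp} f^\sharp\, v^\sharp\,dx,
\]
which takes care of the linear source term (with the correct sign, since it enters $\mathcal F$ with a minus).

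Third, and most delicately, I would show that the lower-order term $\int c_\sharp w^2\,dx$ decreases under Schwarz symmetrization of $w$. Since $c_\sharp$ is the \emph{radially increasing} rearrangement of $c$, while $(v^2)^\sharp = (v^\sharp)^2$ is radially decreasing, this is precisely the content of the reverse Hardy-Littlewood inequality \eqref{HardyLit_basso}, applied with $f = v^2$ and $g = c_\sharp$ (using $(c_\sharp)_\sharp = c_\sharp$):
\[
\int_{\Omega^\sharp} c_\sharp\, (v^\sharp)^2\,dx \leq \int_{\Omega^\sharp} c_\sharp\, v^2\,dx.
\]
I expect this last step to be the subtle one, because it crucially exploits the opposite radial monotonicities of $c_\sharp$ (increasing) and $(v^\sharp)^2$ (decreasing); without this clash of orientations the inequality would point the wrong way. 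Once the three ingredients are assembled, $\mathcal F(v^\sharp) \leq \mathcal F(v)$ follows and the uniqueness asserted in \eqref{char_min} concludes the proof.
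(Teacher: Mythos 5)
Your proposal is correct and follows essentially the same route as the paper: the paper likewise combines Proposition \ref{prop_polya} (applied with kernel $J^\sharp$) and the reverse Hardy--Littlewood inequality \eqref{HardyLit_basso} to get $\mathbb E(v,v;J^\sharp,c_\sharp)\ge\mathbb E(v^\sharp,v^\sharp;J^\sharp,c_\sharp)$, uses \eqref{HardyLit} for the source term $\int f^\sharp v\le\int f^\sharp v^\sharp$, and concludes $v=v^\sharp$ from the uniqueness of the minimizer in the characterization \eqref{char_min}. Your term-by-term decomposition and sign discussion of the $c_\sharp$ term simply make explicit what the paper states more compactly.
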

\begin{proof}
For all $\varphi \in H_{\Omega^\sharp}(\R^N, J^\sharp)$, let us consider
\begin{equation*}
\mathbb E(v,\varphi;J^\sharp,c_\sharp)  =  \frac{1}{2}\int_{\R^{N}}\int_{\R^{N}}J^\sharp(x-y)\left(v(x)-v(y)\right)\left(\varphi(x)-\varphi(y)\right)dydx+\int_{\Omega^{\sharp} }c_\sharp(x)v(x)\varphi(x)dx\end{equation*} 
By Proposition \ref{prop_polya} and by \eqref{HardyLit_basso}, we have 
\begin{equation}
    \label{E>E}
\mathbb E (v,v; J^\sharp,c_\sharp)\geq \mathbb E (v^\sharp,v^\sharp; J^\sharp,c_\sharp).
\end{equation}
On the other hand, we have
\begin{equation}
\label{v>v}
\int_{\Omega^\sharp}f^\sharp (x)v(x)\,dx\leq \int_{\Omega^\sharp}f^\sharp(x)v^\sharp(x)\,dx.
\end{equation}
Therefore, by \eqref{E>E} and \eqref{v>v}, we have that $v^{\sharp}$ is a minimizer of  the functional 
\[
\mathbb E (v,v; J^\sharp,c_\sharp)-\int_{\Omega^{\sharp}} f^{\sharp}(x)v(x)dx,
\]
thus get the conclusion in view of the uniqueness of the weak solution and the characterization \eqref{char_min}.
\end{proof}

\subsection{A coarea formula and a Maximum Principle}
In this section, we recollect two result that, on the one hand, are very useful to prove the main Theorems, and, on the other hand, have an interest on their own.

Firstly, we prove a coarea formula; we adapt the proof from \cite[Lemma 10]{ambrosio2011gamma} (and \cite{visintin1991generalized}) for the fractional Laplacian to our general case.
\begin{proposition}\label{coarea_nonlocale}
Let $K$ satisfy the assumptions \eqref{assumption1} and \eqref{assumption2}, and $u$ be a nonnegative measurable function such that
\[
\int_{\R^N}\int_{\R^N} K(x,y)|u(x)-u(y)|dydx<+\infty.
\]
Then
\[
\frac{1}{2}\int_{\R^N}\int_{\R^N} K(x,y)|u(x)-u(y)|dydx=\int_0^{\sup u}\int_{u>t}\int_{u\leq t}K(x,y)dydxdt.
\]
\end{proposition}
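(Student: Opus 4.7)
The plan is to prove the identity by the classical layer--cake (or coarea) decomposition of $|u(x)-u(y)|$ into superlevel-set indicators, combined with the symmetry of $K$.

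First, since $u\geq 0$, I would rewrite $u(x)=\int_{0}^{+\infty}\chi_{\{u>t\}}(x)\,dt$, and then verify by direct case analysis that
\[
|u(x)-u(y)|=\int_{0}^{+\infty}\bigl|\chi_{\{u>t\}}(x)-\chi_{\{u>t\}}(y)\bigr|\,dt.
\]
Indeed, if $u(x)\geq u(y)$, the integrand equals $1$ precisely on $[u(y),u(x))$ and $0$ elsewhere, so the integral returns $u(x)-u(y)$; the case $u(x)<u(y)$ is symmetric. Note that the integrand is supported on $[0,\sup u)$, which is why the outer integral in the statement runs only up to $\sup u$.

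Second, I would multiply by $K(x,y)$, integrate in $(x,y)$, and exchange the order of integration. Here Fubini--Tonelli is what makes the proof work: the nonnegativity of the integrand already justifies the exchange in $[0,+\infty]$-valued sense, and the hypothesis $\int_{\R^N}\int_{\R^N}K(x,y)|u(x)-u(y)|\,dy\,dx<+\infty$ ensures that all resulting integrals are finite. This yields
\[
\int_{\R^N}\!\!\int_{\R^N} K(x,y)|u(x)-u(y)|\,dy\,dx=\int_{0}^{\sup u}\!\int_{\R^N}\!\!\int_{\R^N}K(x,y)\bigl|\chi_{\{u>t\}}(x)-\chi_{\{u>t\}}(y)\bigr|\,dy\,dx\,dt.
\]

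Third, for each fixed $t>0$, I would use the pointwise identity
\[
\bigl|\chi_{\{u>t\}}(x)-\chi_{\{u>t\}}(y)\bigr|=\chi_{\{u>t\}}(x)\chi_{\{u\leq t\}}(y)+\chi_{\{u\leq t\}}(x)\chi_{\{u>t\}}(y),
\]
which reflects that the indicator difference is $1$ iff exactly one of $x,y$ lies in the superlevel set. Plugging this in and invoking the symmetry assumption \eqref{assumption1}, $K(x,y)=K(y,x)$, the two resulting double integrals are equal, so
\[
\int_{\R^N}\!\!\int_{\R^N}K(x,y)\bigl|\chi_{\{u>t\}}(x)-\chi_{\{u>t\}}(y)\bigr|\,dy\,dx=2\int_{u>t}\!\int_{u\leq t} K(x,y)\,dy\,dx.
\]
Combining with the factor $\frac{1}{2}$ in front of the left-hand side of the claim yields exactly the stated formula.

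The proof is essentially algebraic once one has the layer-cake representation of $|u(x)-u(y)|$; the only genuine obstacle is the justification of Fubini's theorem in the triple integral, which is precisely where the summability hypothesis $\int\!\!\int K(x,y)|u(x)-u(y)|\,dy\,dx<+\infty$ enters, together with the symmetry condition \eqref{assumption1} needed to pair the two ``across-the-level-set'' integrals. The integrability condition \eqref{assumption2} plays no direct role here; it was used to give meaning to $\mathcal{L}u$ and the functional setting, whereas the coarea identity itself is an abstract consequence of the symmetry of $K$ and the finiteness assumption.
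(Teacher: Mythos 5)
Your proposal is correct and follows essentially the same route as the paper: the layer-cake representation $|u(x)-u(y)|=\int_0^{\sup u}|\chi_{\{u>t\}}(x)-\chi_{\{u>t\}}(y)|\,dt$, Fubini--Tonelli, the identity $|\chi_{\{u>t\}}(x)-\chi_{\{u>t\}}(y)|=\chi_{\{u>t\}}(x)\chi_{\{u\leq t\}}(y)+\chi_{\{u\leq t\}}(x)\chi_{\{u>t\}}(y)$, and the symmetry of $K$ to merge the two cross terms into the factor $2$. If anything, you make explicit (the role of symmetry \eqref{assumption1} and of the finiteness hypothesis in Fubini) what the paper leaves implicit.
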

\begin{proof}
Let us consider the function
\[
t\in [0,\sup u]\mapsto \chi_{u>t}(x)-\chi_{u>t}(y)\in \{-1,0,1\}.
\]
We have
\begin{equation}
\label{relaz_chi_1}
|\chi_{u>t}(x)-\chi_{u>t}(y)|=\left\{
\begin{array}{ll}
0   &   \text{if }t<\min\{u(x),\> u(y)\}\\
\\
0   &   \text{if }t\ge\max\{u(x),\> u(y)\}\\
\\
1   &     \text{if }\min\{u(x),\> u(y)\} \le t<\max\{u(x),\> u(y)\},
\end{array}
\right.
\end{equation}
and
\begin{equation}
    \label{relaz_chi_2}
|\chi_{u>t}(x)-\chi_{u>t}(y)|=\chi_{u>t}(x)\chi_{u\le t}(y)+\chi_{u\le t}(x)\chi_{u>t}(y).
\end{equation}

It follows that
\[
|u(x)-u(y)|=\int_0^{\sup u}|\chi_{u>t}(x)-\chi_{u>t}(y)|dt,
\]
and, furthermore, by Fubini Theorem, we have
\[
\begin{split}
&\int_{\R^N}\int_{\R^N} K(x,y)|u(x)-u(y)|dydx\\
&\quad =\int_\Omega\int_\Omega K(x,y)\int_0^{\sup u}|\chi_{u>t}(x)-\chi_{u>t}(x)|dtdydx\\
&\quad =2\int_0^{u_M}\int_{u>t}\int_{u\le t}K(x,y)dydxdt.
\end{split}
\]
\end{proof}
The following variant of the previous result involving the truncature of a suitable integrable function will be useful in the proof of the main Theorem.
\begin{proposition}\label{coarea_nonlocale2}
Let $K$ satisfy the assumptions \eqref{assumption1} and \eqref{assumption2}, and $u$ be a nonnegative measurable function such that
\[
\int_{\R^N}\int_{\R^N} K(x,y)(u(x)-u(y))^2 dydx<+\infty.
\]
Then, for every $t\ge0$, we have
\begin{equation}
\label{coarea_level}
\begin{split}
\frac{1}{2}\int_{\R^N}\int_{\R^N} K(x,y)(u(x)-u(y))\left((u(x)-t)^+-(u(y)-t)^+\right)dydx\\
=\int_t^{+\infty}\int_{u>\tau}\int_{u\le \tau}K(x,y)(u(x)-u(y))dydxd\tau.
\end{split}
\end{equation}
\end{proposition}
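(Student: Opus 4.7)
My plan is to mimic the proof of Proposition \ref{coarea_nonlocale} but weighted by the factor $(u(x)-u(y))$. The key is the layer-cake identity
\[
(u(x)-t)^+ - (u(y)-t)^+ = \int_{t}^{+\infty}\bigl(\chi_{u>\tau}(x)-\chi_{u>\tau}(y)\bigr)\,d\tau,
\]
which I would substitute into the left-hand side. Before invoking Fubini, I would verify absolute integrability of the integrand on the left: the $1$-Lipschitz character of the positive part gives
\[
\bigl|(u(x)-t)^+-(u(y)-t)^+\bigr|\le |u(x)-u(y)|,
\]
so the integrand is dominated by $K(x,y)(u(x)-u(y))^2$, which is integrable by hypothesis.

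After inserting the layer-cake representation and interchanging the order of integration, it suffices to show for a.e.\ $\tau\ge t$ that
\[
\frac12\int_{\R^N}\int_{\R^N}K(x,y)(u(x)-u(y))\bigl(\chi_{u>\tau}(x)-\chi_{u>\tau}(y)\bigr)\,dy\,dx=\int_{u>\tau}\int_{u\le \tau}K(x,y)(u(x)-u(y))\,dy\,dx.
\]
I note that the integrand on the left is pointwise nonnegative (cases: if $u(x)>\tau\ge u(y)$ then both factors have the same sign; if $u(y)>\tau\ge u(x)$ then both are nonpositive; otherwise the second factor vanishes), so Tonelli justifies the exchange of the $d\tau$ integral with the double integral from the outset.

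To reduce the left-hand double integral, I would split it as
\[
\int\!\!\!\int K(x,y)(u(x)-u(y))\chi_{u>\tau}(x)\,dy\,dx-\int\!\!\!\int K(x,y)(u(x)-u(y))\chi_{u>\tau}(y)\,dy\,dx,
\]
and swap $x\leftrightarrow y$ in the second summand: by the symmetry \eqref{assumption1} of $K$ and the antisymmetry of $u(x)-u(y)$, that summand equals the negative of the first one, so the whole expression reduces to
\[
2\int_{u>\tau}\int_{\R^N}K(x,y)(u(x)-u(y))\,dy\,dx.
\]
Splitting the inner integral according to $\{u\le\tau\}\cup\{u>\tau\}$, the piece on $\{u>\tau\}\times\{u>\tau\}$ is antisymmetric under $x\leftrightarrow y$ (again using $K(x,y)=K(y,x)$), hence vanishes. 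What remains is exactly $2\int_{u>\tau}\int_{u\le\tau}K(x,y)(u(x)-u(y))\,dy\,dx$, which after dividing by two and integrating in $\tau$ yields the claimed identity.

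The only delicate point is the justification of Fubini/Tonelli in the swap between the $d\tau$ integral and the $dy\,dx$ integral on singletons where $u$ takes the value $\tau$; this is handled by the pointwise nonnegativity just noted, so Tonelli applies directly, and the set $\{(x,y):u(x)=\tau \text{ or }u(y)=\tau\}$ contributes a null set in $\tau$. I do not expect any other obstacle beyond careful bookkeeping of the symmetry manipulations.
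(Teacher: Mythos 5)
Your overall skeleton (layer--cake representation of $(u(\cdot)-t)^+$, domination by $K(x,y)(u(x)-u(y))^2$, Tonelli via pointwise nonnegativity, then symmetry of $K$) is the same as the paper's, and your final identity for fixed $\tau$ is the right one. However, the way you reduce the fixed-$\tau$ double integral contains a genuine gap: you split
\[
\int\!\!\!\int K(x,y)(u(x)-u(y))\bigl(\chi_{u>\tau}(x)-\chi_{u>\tau}(y)\bigr)\,dy\,dx
\]
into the difference of $\int\!\!\int K(x,y)(u(x)-u(y))\chi_{u>\tau}(x)\,dy\,dx$ and the analogous term with $\chi_{u>\tau}(y)$, and later you discard the piece on $\{u>\tau\}\times\{u>\tau\}$ ``by antisymmetry''. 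Neither of these separate integrals is known to be absolutely convergent: the hypotheses only give integrability of $K(x,y)(u(x)-u(y))^2$ and the local bound \eqref{assumption2}, not of $K(x,y)|u(x)-u(y)|$ over $\{u>\tau\}\times\{u>\tau\}$. For instance, for the fractional kernel $K(x,y)=|x-y|^{-N-2s}$ with $s\ge \tfrac12$ and a smooth bump $u$, one has $\int_{u>\tau}\int_{u>\tau}K|u(x)-u(y)|\,dy\,dx=+\infty$ for every $\tau<\max u$, so the two summands you subtract, as well as the ``antisymmetric'' piece you claim vanishes, are simply not defined as Lebesgue integrals; a swap $x\leftrightarrow y$ or a cancellation argument cannot be applied to them.

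The repair is short and is essentially what the paper does, and in fact you already have it in hand in your case analysis: since $\chi_{u>\tau}(x)-\chi_{u>\tau}(y)$ vanishes unless exactly one of $u(x)>\tau$, $u(y)>\tau$ holds, one has the pointwise identity (cf.\ \eqref{relaz_chi_2})
\begin{equation*}
(u(x)-u(y))\bigl(\chi_{u>\tau}(x)-\chi_{u>\tau}(y)\bigr)=|u(x)-u(y)|\,\bigl(\chi_{u>\tau}(x)\chi_{u\le\tau}(y)+\chi_{u\le\tau}(x)\chi_{u>\tau}(y)\bigr),
\end{equation*}
so the integrand is decomposed into two \emph{nonnegative} pieces supported on the disjoint crossing regions; Tonelli applies to each, and the symmetry \eqref{assumption1} of $K$ shows the two contributions are equal, giving directly $2\int_{u>\tau}\int_{u\le\tau}K(x,y)(u(x)-u(y))\,dy\,dx$ without ever writing a conditionally convergent (or undefined) integral. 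With that substitution your argument becomes complete and coincides with the paper's proof.
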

\begin{proof}
Let us consider, for $t\ge0$, the function 
\begin{equation*}
A(t)=\int_{\R^{N}}\int_{\R^{N}}K(x,y)\left(u(x)-u(y)\right)\left((u(x)-t)^+-(u(y)-t)^+\right)dydx,
\end{equation*}
that is finite because
\[\left(u(x)-u(y)\right)\left((u(x)-t)^+-(u(y)-t)^+\right)\le\left(u(x)-u(y)\right)^2.
\]
Using \eqref{relaz_chi_1} and \eqref{relaz_chi_2}, we have
\begin{equation*}
|(u(x)-t)^+-(u(y)-t)^+|=\int_t^{+\infty}\Bigl(\chi_{u\le\tau}(x)\chi_{u>\tau}(y)+\chi_{u>\tau}(x)\chi_{u\le\tau}(y)\Bigr)d\tau.
\end{equation*}
Therefore, we can write
\begin{align*}
A(t)&=\int_t^{+\infty}\int_{\R^{N}}\int_{\R^{N}}K(x,y)|u(x)-u(y)|\Bigl(\chi_{u\le\tau}(x)\chi_{u>\tau}(y)+\chi_{u>\tau}(x)\chi_{u\le\tau}(y)\Bigr)dydxd\tau\\
&=\int_t^{+\infty}\Biggl( \int_{u(x)\le\tau}\int_{u(y)>\tau}K(x,y)|u(x)-u(y)|dydx+\\
&\qquad+\int_{u(x)>\tau}\int_{u(y)\le\tau}K(x,y)|u(x)-u(y)|dydx\Biggr)dt= \\
&= 2\int_t^{+\infty}\int_{u(x)>\tau}\int_{u(y)\le\tau}K(x,y)\bigl(u(x)-u(y)\bigr)dydxd\tau. 
\end{align*}
Therefore \eqref{coarea_level} is proved.
\end{proof}

Now, we recall the following minimum/maximum principle from \cite[Lemma 2.1]{brandolini2022comparison}, whose proof is mainly based on \cite[Theorem 1]{Alvino2008sharp} and \cite[Theorem 3.2]{VazVol1}.

\begin{proposition}\label{MaxMin}
Let $u,v$ be two nonnegative, radial and summable  functions on $B_R$. Let us assume that the function
\[
r\in[0,R]\mapsto\int_{B_r}(u(x)-v(x))dx
\] admits a positive maximum point at $\bar r >0$, that is, 
\begin{equation*}
0<\int_{B_{\bar r}}(u(x)-v(x))dx=\max_{r\in [0,R]}\left(\int_{B_r}(u(x)-v(x))dx\right).
\end{equation*}
Then, if $\mathfrak{h}_1$ is a positive radially increasing function such that $(u(x)-v(x))\mathfrak h_1 (x)$ is summable on $B_{\bar r}$, we have
\begin{equation*}
\int_{B_{\bar{r}}}(u(x)-v(x))\mathfrak{h}_1(x)dx>0.
\end{equation*}
Analogously, let us assume that the function
\[
r\in[0,R]\mapsto \int_{B_R\setminus B_r}(u(x)-v(x))dx
\] 
admits a negative minimum point at $0<\bar{r}<R$, that is,
\begin{equation*}
0>\int_{B_R \setminus B_{\bar r}}(u(x)-v(x))dx=\min_{r\in [0,R]}\left( \int_{B_R \setminus B_{r}}(u(x)-v(x))dx\right).
\end{equation*}
Then, if $\mathfrak{h}_2$ is a positive, radially decreasing function such that $(u(x)-v(x))\mathfrak h_2(x)$ is summable on $B_{R-\bar r}$, we have
\begin{equation*}
\int_{B_R\setminus B_{\bar r}}(u(x)-v(x))\mathfrak{h}_2(x) dx<0.
\end{equation*}
\end{proposition}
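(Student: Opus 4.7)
The plan is to reduce the proposition to a one-dimensional Lebesgue--Stieltjes integration by parts in radial coordinates, exploiting the monotonicity of $\mathfrak{h}_i$ against the extremality of $\bar r$.

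For the maximum case, let $\tilde u,\tilde v,\tilde{\mathfrak{h}}_1$ denote the radial profiles of $u,v,\mathfrak{h}_1$ on $[0,R]$, and set
\[
F(r):=\int_{B_r}(u-v)\,dx=N\omega_N\int_0^r \rho^{N-1}\bigl(\tilde u(\rho)-\tilde v(\rho)\bigr)\,d\rho,\qquad r\in[0,R].
\]
Then $F$ is absolutely continuous, $F(0)=0$, and by assumption $F(\rho)\leq F(\bar r)$ for all $\rho\in[0,R]$ with $F(\bar r)>0$. In these coordinates the target integral becomes
\[
I:=\int_{B_{\bar r}}(u-v)\mathfrak{h}_1\,dx=\int_0^{\bar r}\tilde{\mathfrak{h}}_1(\rho)\,dF(\rho).
\]

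Next I would apply Lebesgue--Stieltjes integration by parts. Since $\tilde{\mathfrak{h}}_1$ is monotone increasing, its differential $d\tilde{\mathfrak{h}}_1$ is a nonnegative Borel measure on $(0,\bar r]$; together with $F(0)=0$ this gives
\[
I=\tilde{\mathfrak{h}}_1(\bar r)F(\bar r)-\int_{(0,\bar r]}F(\rho)\,d\tilde{\mathfrak{h}}_1(\rho).
\]
Combining $F(\rho)\leq F(\bar r)$ with the positivity of $d\tilde{\mathfrak{h}}_1$ yields
\[
\int_{(0,\bar r]}F(\rho)\,d\tilde{\mathfrak{h}}_1(\rho)\leq F(\bar r)\bigl(\tilde{\mathfrak{h}}_1(\bar r)-\tilde{\mathfrak{h}}_1(0^+)\bigr),
\]
whence $I\geq F(\bar r)\,\tilde{\mathfrak{h}}_1(0^+)\geq 0$. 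To promote this to strict positivity I would split into two subcases: if $\tilde{\mathfrak{h}}_1(0^+)>0$, then $I\geq F(\bar r)\tilde{\mathfrak{h}}_1(0^+)>0$ because $F(\bar r)>0$; if instead $\tilde{\mathfrak{h}}_1(0^+)=0$, the positivity and monotonicity of $\mathfrak{h}_1$ force $d\tilde{\mathfrak{h}}_1$ to charge every right-neighborhood of $0$, while the continuity of $F$ together with $F(0)=0<F(\bar r)$ makes the inequality $F(\rho)<F(\bar r)$ strict on a set of positive $d\tilde{\mathfrak{h}}_1$-mass near the origin, yielding $I>0$ anyway.

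The minimum case is completely symmetric: setting $H(r):=\int_{B_R\setminus B_r}(u-v)\,dx=F(R)-F(r)$, we have $H(R)=0$ and $H$ attains its negative minimum at $\bar r$. Writing
\[
J:=\int_{B_R\setminus B_{\bar r}}(u-v)\mathfrak{h}_2\,dx=-\int_{\bar r}^R \tilde{\mathfrak{h}}_2(\rho)\,dH(\rho),
\]
and integrating by parts against the decreasing profile $\tilde{\mathfrak{h}}_2$, whose signed differential $-d\tilde{\mathfrak{h}}_2$ is a nonnegative measure, one obtains the reversed chain of estimates, delivering $J<0$. The main technical obstacle is solely the sharpening from weak to strict inequalities in the degenerate endpoint configurations, and this is handled by combining the continuity of $F$ (respectively $H$) with the hypothesis that $\mathfrak{h}_i$ is strictly positive on the whole ball.
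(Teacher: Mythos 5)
Your proposal is essentially correct, but note that the paper does not prove Proposition \ref{MaxMin} at all: it imports it from \cite{brandolini2022comparison} (Lemma 2.1 there), whose proof is in turn based on \cite{Alvino2008sharp} and \cite{VazVol1}. So there is no in-paper argument to diverge from; what you have written is a self-contained proof, and its mechanism --- passing to radial profiles, viewing the monotone weight through its Stieltjes differential, playing the extremality of $\bar r$ against the nonnegativity of that differential, and treating separately the degenerate endpoint values $\tilde{\mathfrak{h}}_1(0^+)=0$ (resp.\ $\tilde{\mathfrak{h}}_2(R^-)=0$) by means of the continuity of $F$ (resp.\ $H$) at the endpoint where it vanishes --- is precisely the classical Hardy/Alvino--Trombetti-type argument that underlies the cited references. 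Your handling of the strict inequality in the degenerate cases is sound: near the relevant endpoint the difference $F(\bar r)-F(\rho)$ (resp.\ $H(\bar r)-H(s)$) is bounded away from $0$ in the favorable sign, and the differential of $\mathfrak{h}_i$ must charge every neighborhood of that endpoint.

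One technical point needs repair. The identity $I=\tilde{\mathfrak{h}}_1(\bar r)F(\bar r)-\int_{(0,\bar r]}F\,d\tilde{\mathfrak{h}}_1$ presupposes that $\tilde{\mathfrak{h}}_1$ is bounded up to $\rho=\bar r$, whereas the hypotheses only require $(u-v)\mathfrak{h}_1\in L^1(B_{\bar r})$ and thus allow radial profiles blowing up at the sphere $\{|x|=\bar r\}$ (as happens for kernels-type weights such as \eqref{Phi_1} near its critical radius); in that case both the boundary term and $\int_{(0,\bar r]}F\,d\tilde{\mathfrak{h}}_1$ may be infinite and the cancellation you rely on is indeterminate. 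This is easily fixed: either apply your estimate to the truncations $\mathfrak{h}_1\wedge M$ and let $M\to\infty$ using dominated convergence with dominating function $|u-v|\,\mathfrak{h}_1$, or bypass the boundary term by the Fubini (layer-cake) form $I=\tilde{\mathfrak{h}}_1(0^+)F(\bar r)+\int_{(0,\bar r]}\bigl(F(\bar r)-F(s)\bigr)\,d\tilde{\mathfrak{h}}_1(s)$, whose application of Fubini is justified exactly by the assumed summability; the same remark applies at the endpoint $R$ in the minimum case. With this adjustment (and taking a right-continuous representative of the monotone profile, which is harmless since $dF$ is absolutely continuous), your proof is complete; in the paper's actual application the weight is $c_\sharp\in L^\infty(\Omega^\sharp)$, so the issue does not arise there.
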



\subsection{A Monotonicity Result}
In this section, we give a monotonicity result that is a key tool to prove the main Theorem. Firstly, for any $r>0$, let us set 
\begin{align}
\label{Phi_1}
\Phi_1(x)&=\int_{B_r^c}J^\sharp(x-y)dy=\int_{B_r^c(x)}J^\sharp(y)dy\qquad |x|<r,\\
\label{Phi_2}
\Phi_2(y)&=\int_{B_r}J^\sharp(x-y)dx=\int_{B_r(y)}J^\sharp(x)dx \qquad |y|> r.
\end{align}

We observe that under the assumptions made on $J^\sharp$, the functions $\Phi_1$ and $\Phi_2$ are well defined.
Indeed, as observed in the proof of Lemma \ref{peso_L1}, we have
\[
\int_{\R^N}  J^\sharp(y)\min\{|y|^2,1\}dy<+\infty.
\]
Then, $J^\sharp$ is summable in $B_r^c(x)$ when $|x|<r$ because $B_r^c(x)$ does not contain a suitable neighborhood of the origin. Similarly, $J^\sharp$ is summable in $B_r(y)$ when $|y|>r$ for the same reason.

Our aim is to prove that the function $\Phi_1$ is radially increasing and that the function $\Phi_2$ is radially decreasing with respect to any direction passing through the origin. To gain this result, we first give the following.

\begin{lemma}\label{lemma_distanza}
Let us fix $r>0$, then for any $\rho$, $\rho'$ such that $0<\rho<\rho'$, we have:
\begin{equation}
    \label{ine_monotonicity}
\max_{\overline{B_r(\rho e_1)\setminus B_r(\rho' e_1)}}|y|^{2}\le \min_{\overline{B_r(\rho' e_1)\setminus B_r(\rho e_1)}}|y|^{2},
\end{equation}
where $e_1$ is the first vector of the standard basis.

Furthermore, if $\rho'-\rho\leq 2r$, then
\begin{equation}
    \label{eq_monotonicity}
\max_{\overline{B_r(\rho e_1)\setminus B_r(\rho' e_1)}}|y|^{2}=\min_{\overline{B_r(\rho' e_1)\setminus B_r(\rho e_1)}}|y|^{2}=r^2+\rho\rho'.
\end{equation}
\end{lemma}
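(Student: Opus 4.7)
The plan is to exploit a simple geometric observation: the two symmetric-difference sets $B_r(\rho e_1)\setminus B_r(\rho' e_1)$ and $B_r(\rho' e_1)\setminus B_r(\rho e_1)$ are separated by the perpendicular-bisector hyperplane $\{y_1=(\rho+\rho')/2\}$ of the segment joining the two centers, and the critical value $r^2+\rho\rho'$ arises as $|y|^2$ evaluated on the intersection $\partial B_r(\rho e_1)\cap\partial B_r(\rho' e_1)$. The proof is then a purely algebraic manipulation of the defining inequalities of the two balls, so I do not foresee a conceptual obstacle; the only trick is to recognize that subtracting the two ball inequalities produces a linear constraint on $y_1$ alone.

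First I would expand $|y-\rho e_1|^2=|y|^2-2\rho y_1+\rho^2$, so that $y\in\overline{B_r(\rho e_1)}$ is equivalent to $|y|^2\le r^2-\rho^2+2\rho y_1$, and analogously with $\rho$ replaced by $\rho'$. Given $y\in\overline{B_r(\rho e_1)\setminus B_r(\rho' e_1)}$, both $|y-\rho e_1|^2\le r^2$ and $|y-\rho' e_1|^2\ge r^2$ hold; subtracting the first from the second yields
$$2(\rho'-\rho)\,y_1\le (\rho')^2-\rho^2,$$
that is, $y_1\le (\rho+\rho')/2$. Plugging this back into $|y|^2\le r^2-\rho^2+2\rho y_1$ gives the uniform pointwise bound $|y|^2\le r^2+\rho\rho'$ on the first closed set.

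The second set is handled by the symmetric manipulation: for $y\in\overline{B_r(\rho' e_1)\setminus B_r(\rho e_1)}$, the inequalities $|y-\rho e_1|^2\ge r^2$ and $|y-\rho' e_1|^2\le r^2$ force $y_1\ge (\rho+\rho')/2$, and therefore $|y|^2\ge r^2-\rho^2+2\rho y_1\ge r^2+\rho\rho'$. Chaining the two bounds proves \eqref{ine_monotonicity}.

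For the equality statement \eqref{eq_monotonicity}, I would observe that the condition $\rho'-\rho\le 2r$ is exactly what makes $\partial B_r(\rho e_1)\cap\partial B_r(\rho' e_1)$ nonempty. Any point of this intersection satisfies both sphere equations simultaneously; subtracting them forces $y_1=(\rho+\rho')/2$, and substituting into either sphere equation gives $|y|^2=r^2+\rho\rho'$. Such a point lies on the boundary of each of the two set differences, hence in both closures, so the bounds established above are attained as equalities. This closes the proof.
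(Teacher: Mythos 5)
Your proof is correct and follows essentially the same route as the paper: the same two defining ball inequalities, subtraction to obtain the linear constraint $y_1\le(\rho+\rho')/2$ (respectively $y_1\ge(\rho+\rho')/2$), the resulting pointwise bounds $|y|^2\le r^2+\rho\rho'$ and $|y|^2\ge r^2+\rho\rho'$, and attainment at the points of $\partial B_r(\rho e_1)\cap\partial B_r(\rho' e_1)$. The only (minor) differences are that your uniform pointwise bounds dispense with the paper's separate treatment of the disjoint case $\rho'-\rho>2r$, and that, exactly like the paper, you assert rather than verify that the sphere-intersection points lie in the closures of the two set differences (a step that tacitly uses $N\ge2$ and a small perturbation along $e_1$).
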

\begin{proof}
Let us observe that in the case where $\rho'-\rho>2r$ the balls $B_r(\rho e_1)$ and $B_r(\rho' e_1)$ are disjoint, so:
\begin{equation*}
\max_{\overline{B_r(\rho e_1)\setminus B_r(\rho' e_1)}}|y|^2=\max_{\overline{B_r(\rho e_1)}}|y|^2=(\rho+r)^2,
\end{equation*}
and
\begin{equation*}
\min_{\overline{B_r(\rho' e_1)\setminus B_r(\rho e_1)}}|y|^2=\min_{\overline{B_r(\rho' e_1)}}|y|^2=(\rho'-r)^2.
\end{equation*}
Hence, the inequality \eqref{ine_monotonicity} follows because
\[
(\rho+r)^2-(\rho'-r)^2=(\rho'+\rho)(\rho'-\rho-2r)>0.
\]
So, in order to complete the proof, we have to prove \eqref{eq_monotonicity} when $\rho'-\rho\leq 2r$. 

We firstly observe that
\begin{equation}\label{K1}
y\in \overline{B_r(\rho e_1)\setminus B_r(\rho' e_1)}\ \ \Longrightarrow
\begin{cases}
|y-\rho e_{1}|^2\leq r^2\\
|y-\rho'e_{1}|^2\geq r^2
\end{cases}
\Longrightarrow
\begin{cases}
&|y|^2\leq r^2+2y_1\rho-\rho^2 \\
&|y|^2\geq r^2+2y_1\rho'-\rho'^2.
\end{cases}
\end{equation}
By subtracting each other the two estimates above, we have that \[2y_1(\rho-\rho')-\rho^2+\rho'^2\ge 0\] and consequently that 
 \begin{equation*}
y_1\leq \frac{\rho+\rho'}{2}.
 \end{equation*}
Moreover, from the first inequality in \eqref{K1}, it follows that
\[
|y|^2\leq r^2+\rho\rho'.
\]
Observing that the points $y\in\mathbb\R^N$ such that
\[
|y|^2= r^2+\rho\rho'\qquad\text{and}\qquad y_1= \frac{\rho+\rho'}{2}
\]
belong to $\overline{B_r(\rho e_1)\setminus B_r(\rho' e_1)}$ we have that 
\begin{equation}
    \label{eq_monotonicity1}
\max_{\overline{B_r(\rho e_1)\setminus B_r(\rho' e_1)}}|y|^{2}=r^2+\rho\rho'.
\end{equation}
As regards the minimum in \eqref{eq_monotonicity}, we observe that, for any $y\in\overline{B_r(\rho'  e_1)\setminus B_r(\rho e_1)}$, the inequalities in \eqref{K1} hold with the reverse order relation. By proceeding as before, we get
\[
|y|^2\geq r^2+\rho\rho'\qquad\text{and}\qquad y_1\geq \frac{\rho+\rho'}{2},
\]
and, hence, we have
\begin{equation}
      \label{eq_monotonicity2}
\min_{\overline{B_r(\rho' e_1)\setminus B_r(\rho e_1)}}|y|^{2}=r^2+\rho\rho'.
\end{equation}
Finally, the equality chain \eqref{eq_monotonicity} follows by \eqref{eq_monotonicity1} and \eqref{eq_monotonicity2}.
\end{proof}

\noindent At this point, we are in position to state the monotonicity results for \eqref{Phi_1} and \eqref{Phi_2}.
\begin{proposition}\label{prop_monotonicity}
Let $r>0$, then 
\begin{itemize}
\item $\Phi_1(x)$ is radially increasing for any $|x|<r$;
\item $\Phi_2(y)$ is radially decreasing for any $|y|> r$.
\end{itemize}
    \end{proposition}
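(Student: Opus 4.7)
Since $J^{\sharp}$ is radial and both $B_r$ and $B_r^c$ are rotationally invariant about the origin, the functions $\Phi_1,\Phi_2$ are themselves rotationally invariant. Therefore it suffices to prove monotonicity along a fixed ray: fix $0<\rho<\rho'<r$ for $\Phi_1$ (respectively $r<\rho<\rho'$ for $\Phi_2$) and compare the values at $\rho e_1$ and $\rho' e_1$.

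The first step is to rewrite the difference so that the common part cancels. Writing
\[
B_r^c(\rho' e_1)=\bigl(B_r(\rho e_1)\setminus B_r(\rho' e_1)\bigr)\cup\bigl(B_r^c(\rho e_1)\cap B_r^c(\rho' e_1)\bigr),
\]
and analogously for $B_r^c(\rho e_1)$, one obtains
\[
\Phi_1(\rho' e_1)-\Phi_1(\rho e_1)=\int_{B_r(\rho e_1)\setminus B_r(\rho' e_1)}J^{\sharp}(y)\,dy-\int_{B_r(\rho' e_1)\setminus B_r(\rho e_1)}J^{\sharp}(y)\,dy,
\]
and a completely analogous identity for $\Phi_2(\rho e_1)-\Phi_2(\rho' e_1)$.

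The second observation is that the two set‐differences appearing above have the \emph{same Lebesgue measure}. This follows either from the elementary identity
\[
|B_r(\rho e_1)\setminus B_r(\rho' e_1)|=|B_r(\rho e_1)|-|B_r(\rho e_1)\cap B_r(\rho' e_1)|=|B_r(\rho' e_1)\setminus B_r(\rho e_1)|,
\]
or, more geometrically, from the fact that the reflection across the hyperplane $\{y_1=(\rho+\rho')/2\}$ swaps $B_r(\rho e_1)$ with $B_r(\rho' e_1)$ and hence swaps the two set‐differences.

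The third and decisive step applies Lemma \ref{lemma_distanza}: there exists a level $M=M(\rho,\rho',r)$ with
\[
|y|^2\le M\ \text{on}\ \overline{B_r(\rho e_1)\setminus B_r(\rho' e_1)},\qquad |y|^2\ge M\ \text{on}\ \overline{B_r(\rho' e_1)\setminus B_r(\rho e_1)}.
\]
Since $J^{\sharp}$ is radially decreasing, $J^{\sharp}(y)\ge J^{\sharp}(y^{\ast})$ on the first set and $J^{\sharp}(y)\le J^{\sharp}(y^{\ast})$ on the second set, where $|y^{\ast}|^2=M$. Combining this with the equality of measures from step two gives
\[
\int_{B_r(\rho e_1)\setminus B_r(\rho' e_1)}J^{\sharp}\,dy\ge J^{\sharp}(y^{\ast})\,|B_r(\rho e_1)\setminus B_r(\rho' e_1)|\ge \int_{B_r(\rho' e_1)\setminus B_r(\rho e_1)}J^{\sharp}\,dy,
\]
so $\Phi_1(\rho' e_1)\ge\Phi_1(\rho e_1)$, which is the claimed monotonicity. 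The identical chain of inequalities, applied to the analogous decomposition of $\Phi_2(\rho e_1)-\Phi_2(\rho' e_1)$, yields $\Phi_2(\rho e_1)\ge\Phi_2(\rho' e_1)$.

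The only delicate point is making sure that the geometric comparison in step three is valid in both regimes $\rho'-\rho\le 2r$ (the two balls overlap) and $\rho'-\rho>2r$ (the two balls are disjoint); this is exactly the content of Lemma \ref{lemma_distanza}, which treats both cases in a unified way. Apart from that, the argument is a clean ``bathtub principle'' comparison, and no further technical hurdles are expected.
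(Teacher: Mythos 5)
Your proposal is correct and follows essentially the same route as the paper: rotational invariance reduces matters to a ray, the difference is rewritten over the two symmetric-difference sets (which have equal measure), and Lemma \ref{lemma_distanza} combined with the radial monotonicity of $J^{\sharp}$ gives the sign of the difference. Your phrasing via an intermediate level $M$ is just a mild repackaging of the paper's max/min comparison, so no gap remains.
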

\begin{proof}
We firstly observe that $\Phi_i$, $i=1,2$, are radial. Indeed, for any $x_1,x_2$ such that $|x_1|=|x_2|$, there exists a orthogonal rotation matrix $G$, such that $x_2=G(x_1)$ and $\det|G|=1$. By using the change of variables $z=G(y)$, we obtain
\begin{equation*}
\Phi_1(x_1)=\int_{B_r^c}J^\sharp (G(x_1-y))dy=\int_{B_r^c}J^\sharp (x_2-y)dy=\Phi_1(x_2).
\end{equation*}
Similarly, for $\Phi_2$ we have
\begin{equation*}
\Phi_2(y_1)=\int_{B_r}J^\sharp (G(x-y_1))dx=\int_{B_r}J^\sharp (x-y_2)dx=\Phi_1(y_2),
\end{equation*}
and therefore both $\Phi_1$ and $\Phi_2$ are radial.

Being $J^\sharp$ a radial function, it immediately follows that $\Phi_1(x)$ is a radial function, that is $\Phi_1(x)=\Phi_1(|x|)$, for any $|x|<r$.

Therefore, we can study the monotonicity of $\Phi_1$ along a fixed direction passing through the origin. Namely, we will prove that for any $0<\rho <\rho'<r$, we have  $\Phi_1(\rho e_1)\le \Phi_1(\rho' e_1)$, where $e_1$ is the first vector of the standard basis. 
 
Let us observe that since $\rho<\rho'<r$, the sets $B_r(\rho' e_1)\setminus B_r(\rho e_1)$ and $B_r(\rho e_1)\setminus B_r(\rho' e_1)$ are not empty and have the same measure. We have that
\begin{equation}
\label{monotonicity_Phi1}
\begin{split}
\Phi_1(\rho e_1)-\Phi_1(\rho' e_1)&= \int_{B_r^c(\rho e_1)}J^\sharp(y)dy-\int_{B_r^c(\rho' e_1)}J^\sharp(y)dy=\\
&=\int_{B_r(\rho' e_1)\setminus B_r(\rho e_1)}J^\sharp(y)dy-\int_{B_r(\rho e_1)\setminus B_r(\rho' e_1)}J^\sharp(y)dy\\
&\leq \left(\max_{\overline{B_r(\rho' e_1)\setminus B_r(\rho e_1)}} J^\sharp(y)-\min_{\overline{B_r(\rho e_1)\setminus B_r(\rho' e_1)}} J^\sharp(y)\right)|B_r(\rho' e_1)\setminus B_r(\rho e_1)|\leq 0,
\end{split}
\end{equation}
in view of Lemma \ref{lemma_distanza}. Indeed, we notice that since $J^\sharp$ is radially decreasing, the minimum of $J^\sharp$ is achieved in points $\overline{\overline{ y}}\in \overline{B_r(\rho e_1)\setminus B_r(\rho' e_1)}$ such that
\[
|\overline{\overline{ y}}|^{2}=\max_{\overline{B_r(\rho e_1)\setminus B_r(\rho' e_1)}}|y|^{2},
\]
while the maximum of $J^\sharp$ in \eqref{monotonicity_Phi1} is achieved in the point $\bar y\in \overline{B_r(\rho' e_1)\setminus B_r(\rho e_1)}$ such that
\[
|\Bar y|^{2}=\min_{\overline{B_r(\rho' e_1)\setminus B_r(\rho e_1)}}|y|^{2}.
\]

Therefore, the desired monotonicity result follows from Lemma \ref{lemma_distanza} where  we have proven that
\[
\max_{\overline{B_r(\rho e_1)\setminus B_r(\rho' e_1)}}|y|^{2}=\min_{\overline{B_r(\rho' e_1)\setminus B_r(\rho e_1)}}|y|^{2}.
\]
Indeed, from this property follows that for the choice of the points $y_{1}$ and $y_{2}$ as above, we have $|y_{1}|=|y_{2}|$ and thus $J^\sharp(y_{1})=J^\sharp(y_{2})$ and hence the r.h.s. of \eqref{monotonicity_Phi1} vanishes (see also Figure \ref{fig_01_Phi1}).

\begin{figure}[ht]
\begin{picture}(450,200)
\put(100,14){\begin{tikzpicture}[scale=1.1]
\begin{axis}[samples=100,xmin=-210,xmax=210,
      ymin=-210,ymax=300,thick,smooth,no markers,
    axis line style={draw=none},
      tick style={draw=none}, axis equal,xticklabels={,,},
      yticklabels={,,}]
\draw[color=black,fill=white!85!black] (40,0) circle[radius=200];
\draw[color=black,fill=white!85!black
] (100,0) circle[radius=200];
\draw[color=black,fill=white] (72,198) arc
	[
		start angle=98,
		end angle=262,
		x radius=200,
		y radius =200
	] ;
\draw[color=black,fill=white] (72,-198) arc
	[
		start angle=-82,
		end angle=82,
		x radius=200,
		y radius =200
	] ;
\filldraw (40,0) circle (1pt);
\filldraw (0,0) circle (1pt);
\filldraw (100,0) circle (1pt);
\filldraw (72,198) circle (1pt);
\filldraw (72,-198) circle (1pt);
\draw[thick,dotted] (40,0) -- (72,198);
\draw[thick,dotted] (0,0) -- (72,198);
\end{axis}
\end{tikzpicture}}
\put(100,87){\vector(1,0){250}}
\put(207,-4){\vector(0,1){180}}
\put(195,75){$O$}
\put(235,125){$ r$}
\put(215,75){$\rho$}
\put(235,75){$\rho'$}
\end{picture}
\caption{The figure represents the sets $B_r(\rho' e_1)$ and $B_r(\rho e_1)$, when $\rho<\rho'<r$. The symmetric difference between the sets is highlighted. Thicker dots indicate the points where the optimum values of $J^\sharp$ are achieved on the highlighted domain.}
\label{fig_01_Phi1}
\end{figure}
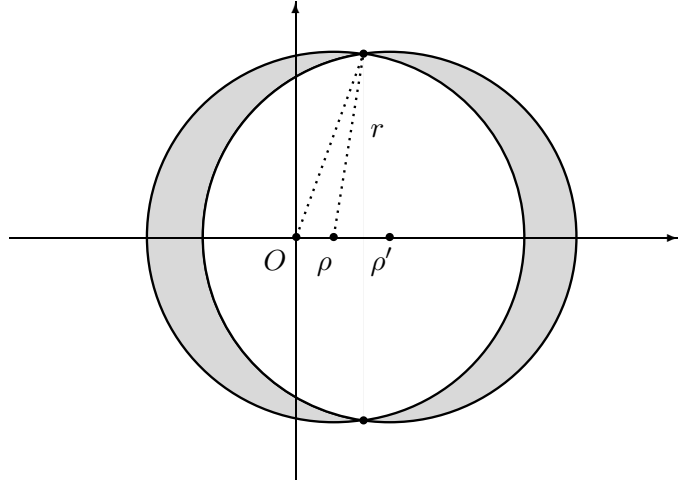

At this stage, we will show that, for any $r<\rho <\rho'<+\infty$, we have $\Phi_2(\rho e_1)\ge \Phi_2(\rho' e_1)$.

We have
\begin{equation*}
\begin{split}
\Phi_2(\rho e_1)-\Phi_2(\rho' e_1)&= \int_{B_r(\rho e_1)}J^\sharp(x)dx-\int_{B_r(\rho' e_1)}J^\sharp(x)dx=\\
&=\int_{B_r(\rho e_1)\setminus B_r(\rho' e_1)}J^\sharp(y)dy-\int_{B_r(\rho' e_1)\setminus B_r(\rho e_1)}J^\sharp(y)dy\\
&\geq \left(\min_{\overline{B_r(\rho e_1)\setminus B_r(\rho' e_1)}} J^\sharp(y)-\max_{\overline{B_r(\rho' e_1)\setminus B_r(\rho e_1)}} J^\sharp(y)\right)|B_r(\rho' e_1)\setminus B_r(\rho e_1)|\ge 0,
\end{split}
\end{equation*}
in view of Lemma \ref{lemma_distanza} (see also Figure \ref{fig_02_Phi2}).
\end{proof}

\begin{figure}[ht]
\begin{picture}(450,200)
\put(100,14){\begin{tikzpicture}[scale=1.1]
\begin{axis}[samples=100,xmin=-210,xmax=210,
      ymin=-210,ymax=300,thick,smooth,no markers, 
    axis line style={draw=none},
      tick style={draw=none}, axis equal,xticklabels={,,},
      yticklabels={,,}]
\draw[color=black,fill=white!85!black] (40,0) circle[radius=200];
\draw[color=black,fill=white!85!black
] (100,0) circle[radius=200];
\draw[color=black,fill=white] (72,198) arc
	[
		start angle=98,
		end angle=262,
		x radius=200,
		y radius =200
	] ;
\draw[color=black,fill=white] (72,-198) arc
	[
		start angle=-82,
		end angle=82,
		x radius=200,
		y radius =200
	] ;
\filldraw (40,0) circle (1pt);
\filldraw (-300,0) circle (1pt);
\filldraw (100,0) circle (1pt);
\filldraw (72,198) circle (1pt);
\filldraw (72,-198) circle (1pt);
\draw[thick,dotted] (-300,0) -- (72,198);
\draw[thick,dotted] (40,0) -- (72,198);
\end{axis}
\end{tikzpicture}}
\put(80,87){\vector(1,0){250}}
\put(102.5,-4){\vector(0,1){180}}
\put(90,75){$O$}
\put(235,125){$ r$}
\put(215,75){$\rho$}
\put(235,75){$\rho'$}
\end{picture}\caption{The figure represents the sets $B_r(\rho' e_1)$ and $B_r(\rho e_1)$, when $r<\rho<\rho'$. The symmetric difference between the sets is highlighted. Thicker dots indicate the points where the optimum values of $J^\sharp$ are achieved on the highlighted domain.}
\label{fig_02_Phi2}
\end{figure}


\section{Proof of Theorem \ref{main_thm_stationary}}
\label{sec_proof}
For any fixed $0\le t<u_{\text{max}}$ and $h>0$, we consider the following test function:
\[
\varphi(x)=\mathcal{G}_{t,h}(u(x)),
\]
where $\mathcal{G}_{t,h}(\theta)$ is the classical truncation
\begin{equation}
\mathcal{G}_{t,h}(\theta)  =\left\{
\begin{array}
[c]{lll}%
h &  & \text{if }\theta > t+h\\
&  & \\
\theta-t\, &  & \text{if }t< \theta \le t+h\\
&  & \\
0 &  & \text{if }\theta \leq t.\text{ }%
\end{array}
\right.\label{truncation}
\end{equation}
For the sake of clarity, we divide the proof into three steps. In the first two claims, we will prove the result supposing that $f\geq 0$; in the third one we will treat the general case; in the fourth one we will give the energy estimate \eqref{energy_estimate}.

{\bf Step 1.} {\it The P\'olya-Szeg\H o type inequality.}

By using  \eqref{truncation} in place of $\varphi$ into the weak formulation \eqref{weak_formulation}, we have
\begin{equation}
\label{eqtest}
\begin{split}
&\frac 12 \int_{\R^{N}}\int_{\R^{N}}K(x,y)\left(u(x)-u(y)\right)\left(\mathcal{G}_{t,h}(u(x))-\mathcal{G}_{t,h}(u(y))\right)dydx\\
&\qquad\qquad\qquad\qquad\qquad\qquad+\int_{\Omega} c(x)u(x)\mathcal{G}_{t,h}(u(x))dx=\int_{\Omega}f(x)\,\mathcal{G}_{t,h}(u(x))\,dx.
\end{split}
\end{equation}

Following \cite[Section 9]{ALIEB} with $K(x,y)$ in place of $|x-y|^{-N-2s}$, we can write
\begin{equation}
\label{summability_representation}
\begin{split}  
&\int_{\R^{N}}\int_{\R^{N}}K(x,y)\left(u(x)-u(y)\right)\left(\mathcal{G}_{t,h}(u(x))-\mathcal{G}_{t,h}(u(y))\right) dydx\\
&=\int_{0}^{+\infty}\int_{\R^{N}}\int_{\R^{N}}e^{-\frac t{K(x,y)}}\left(u(x)-u(y)\right)\left(\mathcal{G}_{t,h}(u(x))-\mathcal{G}_{t,h}(u(y))\right) dydxdt.
\end{split}
\end{equation}
In view of the above representation, we can apply inequality \eqref{F} of Proposition \ref{riesz_generalized}. Indeed, the function $F(u,v)=u^2+v^2-(u-v)(\mathcal{G}_{t,h}(u-\mathcal{G}_{t,h}(v))$ and $W(x)=e^{-\frac t{J(x)}}$ belong to $L^1(\mathbb R^N)$ because of Lemma \ref{pesoL1}. Using $a=1$ and $b=-1$, we have
\begin{equation}
\label{dopo_riesz}
\begin{split}  
&\int_0^{+\infty}\int_{\R^{N}}\int_{\R^{N}}e^{-\frac t{J(x-y)}}\left[u^2(x)+u^2(y)-(u(x)-u(y))\left(\mathcal{G}_{t,h}(u(x))-\mathcal{G}_{t,h}(u(y))\right)\right]dy\,dx\,dt\\
&\leq\int_0^{+\infty}\int_{\R^{N}}\int_{\R^{N}}e^{-\frac t{J^\sharp(x-y)}}\left[(u^{\sharp})^2(x)+(u^{\sharp})^2(y)-(u^{\sharp}(x)-u^{\sharp}(y))\left(\mathcal{G}_{t,h}(u^{\sharp}(x))-\mathcal{G}_{t,h}(u^{\sharp}(y))\right)\right]\,dy\,dx\, dt.
\end{split}
\end{equation}
By using the inequality \eqref{dopo_riesz}, the assumption \eqref{assumption3} and then the representation \eqref{summability_representation}, we deduce that \begin{equation}
\label{Polyatype}   
\begin{split} 
&\int_{\R^{N}}\int_{\R^{N}}J^\sharp(x-y)\left(u^{\sharp}(x)-u^{\sharp}(y)\right)\left(\mathcal{G}_{t,h}(u^{\sharp}(x))-\mathcal{G}_{t,h}(u^{\sharp}(y))\right)dydx    \\
&\le \int_{\R^{N}}\int_{\R^{N}}K(x,y)\left(u(x)-u(y)\right)\left(\mathcal{G}_{t,h}(u(x))-\mathcal{G}_{t,h}(u(y))\right) dydx.
\end{split}
\end{equation}
Then, by joining \eqref{eqtest} and \eqref{Polyatype}, we have
\begin{equation}
\begin{split}  
&\frac{1}{2}\int_{\R^{N}}\int_{\R^{N}}J^\sharp(x-y)\left(u^{\sharp}(x)-u^{\sharp}(y)\right)\left(\mathcal{G}_{t,h}(u^{\sharp}(x))-\mathcal{G}_{t,h}(u^{\sharp}(y))\right)dydx\\
&\qquad\qquad\qquad\qquad\qquad+\int_{\R^N} c(x)u(x)\mathcal{G}_{t,h}(u(x))dx
 \leq \int_{\R^N}f(x)\,\mathcal{G}_{t,h}(u(x))\,dx.\label{mainineq} 
\end{split}
\end{equation}
At this stage, for any $t\ge0$, let us consider the function 
\begin{equation}\label{Psi}
\Psi(t)=\int_{\R^{N}}\int_{\R^{N}}J^\sharp(x-y)\left(u^\sharp(x)-u^\sharp(y)\right)\left((u^\sharp(x)-t)^+-(u^\sharp(y)-t)^+\right)dydx.
\end{equation}
The above integral is finite because
\[\left(u^\sharp(x)-u^\sharp(y)\right)\left((u^\sharp(x)-t)^+-(u^\sharp(y)-t)^+\right)\le\left(u^\sharp(x)-u^\sharp(y)\right)^2.
\]
Furthermore, $\Psi(t)$ is monotone decreasing in view of the fact that, for $t_1<t_2$,
\begin{equation*}
\begin{split}  
\left(u^\sharp(x)-u^\sharp(y)\right)&\left((u^\sharp(x)-t_1)^+-(u^\sharp(y)-t_1)^+\right)\ge\\
&\ge\left(u^\sharp(x)-u^\sharp(y)\right)\left((u^\sharp(x)-t_2)^+-(u^\sharp(y)-t_2)^+\right)
\end{split}
\end{equation*}
A direct computation gives, for $h>0$,
\[\Psi(t)-\Psi(t+h)=\int_{\R^{N}}\int_{\R^{N}}J^\sharp(x-y)\left(u^{\sharp}(x)-u^{\sharp}(y)\right)\left(\mathcal{G}_{t,h}(u^{\sharp}(x))-\mathcal{G}_{t,h}(u^{\sharp}(y))\right)dydx
\]
so, using \eqref{mainineq} and Hardy-Littlewood inequality \eqref{HardyLit}, we have
\begin{equation}\label{inePsi}
\Psi(t)-\Psi(t+h)\le 2\int_{t<u^{\sharp}\le t+h}f^\sharp(x)\bigl(u^\sharp(x)-t\bigr)dx+2h\int_{u^{\sharp}>t+h}f^\sharp(x)dx
\end{equation}
and then $\Psi(t)$ satisfies a Lipschitz condition.
Putting
\begin{align*}
I(h)&=\frac1{2h}\int_{\R^{N}}\int_{\R^{N}}J^\sharp(x-y)\left(u^\sharp(x)-u^\sharp(y)\right)\left(\mathcal{G}_{t,h}(u^\sharp(x))-\mathcal{G}_{t,h}(u^\sharp(y))\right)dydx\\
&=\frac12\bigl(\Psi(t)-\Psi(t+h)\bigr)
\end{align*}
we have, for a.e. $t$,
\begin{equation*}
\lim_{h\rightarrow0^+}I(h)=-\frac12\Psi'(t)
\end{equation*}

On the other hand, by Lemma \ref{coarea_nonlocale2}, we have
\begin{align*}
\Psi(t)= 2\int_t^{+\infty}\int_{u^\sharp(x)>\tau}\int_{u^\sharp(y)\le\tau}J^\sharp(x-y)\bigl(u^\sharp(x)-u^\sharp(y)\bigr)dydxd\tau, 
\end{align*}
It follows
\[\lim_{h\rightarrow0^+}I(h)=-\frac12\Psi'(t)= \int_{u^\sharp(x)>t}\int_{u^\sharp(y)\le t}J^\sharp(x-y)\bigl(u^\sharp(x)-u^\sharp(y)\bigr)dydx.
\]

Taking into account \eqref{mainineq}, \eqref{Psi} and \eqref{inePsi}, it follows
\begin{equation}
\label{insieme}
\begin{split}  
&\frac 12\int_{u^\sharp(x)>t}\int_{u^\sharp(y)\leq t}J^\sharp(x-y)\left(u^{\sharp}(x)-u^{\sharp}(y)\right)dydx\\
&\qquad\qquad\qquad\qquad\qquad+\int_{u^\sharp>t}c_\sharp(x)u^\sharp(x)dx \leq \int_{u^\sharp>t}f^\sharp(x)dx. 
\end{split}
\end{equation}


Hence, for any $r=r_{u}(t)$ the inequality \eqref{insieme} can be written as
\begin{equation}
\label{insieme_Br}
\frac 12 \int_{B_r}\int_{B_r^c}J^\sharp(x-y)(u^{\sharp}(x)-u^{\sharp}(y))dydx+\int_{B_r}c_\sharp(x)u^\sharp(x)dx \leq \int_{B_r}f^\sharp(x)dx.
\end{equation}
Let us observe that if $u^\sharp$ presents a flat zone in correspondence of $r$, that is $r\in[r_{u}(t),r_{u}(t^-)]$, then the inequality \eqref{insieme_Br}
holds true also when $r$ is substituted by $r_{u}(t^-)$, since
\[
\lim_{\varepsilon\to 0^+}r_{u}(t-\varepsilon)=r_{u}(t^-).
\]

{\bf Step 2.} {\it The comparison result.}

As regards the weak solution $v$ to problem \eqref{problem2}, we observe that, integrating the equation on a generic ball $B_r$, we have
\begin{equation}
    \label{insieme_Br_v}
\frac 12\int_{B_r}\int_{B_r^c}J^\sharp(x-y) \left(v(x)-v(y)\right)dydx+\int_{B_r} c_\sharp(x)v(x)dx=\int_{B_r} f^\sharp(x)dx.
\end{equation}
By \eqref{insieme_Br} and \eqref{insieme_Br_v}, for any $r\in[0,+\infty[$ such that $r=r_u(t)$ or $r=r_u(t^-)$ for some $t$, it follows that
\begin{equation}
    \label{insieme_Br_differenza}
\begin{split}
\frac 12\int_{B_r}\int_{B_r^c}J^\sharp(x-y) \left(u^\sharp(x)-u^\sharp(y)\right)dydx-\frac 12\int_{B_r}\int_{B_r^c}J^\sharp(x-y) \left(v(x)-v(y)\right)dydx\\
+\int_{B_r} c_\sharp(x)(u^\sharp(x)-v(x))dx\leq 0.
\end{split}
\end{equation}

At this stage, it remains to prove that
\begin{equation*}
\int_{B_r}u^\sharp(x)dx\leq \int_{B_r}v(x)dx\qquad\forall r\geq 0.
 \end{equation*}

Let us suppose, by contradiction, that the function 
\begin{equation}\label{w}
r\mapsto\int_{B_r}u^\sharp(x)\,dx-\int_{B_r}v(x)\,dx
\end{equation}
has a positive maximum point at $\bar{r}\in (0,R]$, i.e.,
\begin{equation}\label{max>0}
0<\int_{B_{\bar{r}}}u^\sharp(x)\,dx-\int_{B_{\bar{r}}}v(x)\,dx=\max_{r\in[0,R]}\left(\int_{B_r}u^\sharp(x)\,dx-\int_{B_r}v(x)\,dx\right).
\end{equation}

We observe that the function defined in \eqref{w} can be written as a function of the variable $s=\omega_Nr^N$
\[
\int_{B_r}u^\sharp(x)\,dx-\int_{B_r}v(x)\,dx=\int_0^s u^*(\sigma)\,d\sigma-\int_0^s v^*(\sigma)\,d\sigma
\]
and the function
\begin{equation}\label{ws}
s\mapsto\int_0^s u^*(\sigma)\,d\sigma-\int_0^s v^*(\sigma)\,d\sigma
\end{equation}
is convex in any interval where $u^*$ is constant. Thus, if for some $t\in[0,\sup u]$ we have $r_{u}(t)<r_{u}(t^-)$, then the function defined in \eqref{ws} attains its maximum on $\big[\omega_N\big(r_{u}(t)\big)^N,\>\allowbreak \omega_N\big(r_{u}(t^-)\big)^N\big]$ at the boundary. It follows that also  the function defined in \eqref{w} attains its maximum on $[r_{u}(t),\>r_{u}(t^-)]$ at the boundary.

In view of the above observation, we have that the value $\bar r$ can be chosen in such a way that for some $t\in[0,\sup u]$, then either $\bar r=r_{u}(t)$ or $\bar r=r_{u}(t^-)$. 


We also observe that at the point $\bar r$ the function
\begin{equation*}
r\mapsto\int_{B_r^c}u^\sharp(x)\,dx-\int_{B_r^c}v(x)\,dx
\end{equation*}
admits a nonpositive minimum point, that is,
\begin{equation}\label{min<0}
0\ge\int_{B_{\bar r}^c}u^\sharp(x)\,dx-\int_{B_{\bar r}^c}v(x)\,dx=\min_{r\in [0,R]}\left( \int_{B_r^c}u^\sharp(x)\,dx-\int_{B_r^c}v(x)\,dx\right).
\end{equation}
Indeed, for every $r\in [0,R]$,
\begin{align*}
&\int_{B_r^c}u^\sharp(x)\,dx-\int_{B_r^c}v(x)\,dx=\int_{B_R}\bigl(u^\sharp(x)-v(x)\bigr)\,dx-
\int_{B_{r}}\bigl(u^\sharp(x)-v(x)\bigr)\,dx\\
&\qquad\ge\int_{B_R}\bigl(u^\sharp(x)-v(x)\bigr)\,dx-
\int_{B_{\bar r}}\bigl(u^\sharp(x)-v(x)\bigr)\,dx=\int_{B_{\bar r}^c}u^\sharp(x)\,dx-\int_{B_{\bar r}^c}v(x)\,dx
\end{align*}
where the right-hand side in the above inequality is nonpositive because $\bar r$ is a maximum point for the function \eqref{w}.

We have
\begin{equation}
    \label{Juu_insieme}
\begin{split}
        &\int_{B_{\bar r}}\int_{B_{\bar r}^c}J^\sharp(x-y) \bigl(u^\sharp(x)-u^\sharp(y)\bigr)dydx=
\\
&=\int_{|x'|=1}\left(\int_0^{\bar r}\left(\int_{\bar r}^{+\infty}\left( \int_{|y'|=1}J^\sharp(\rho x'-\tau y') \bigl(u^\sharp(\rho)-u^\sharp(\tau)\bigr)dH^{N-1}(y')\right)\right.\right.\\
&\qquad\qquad\qquad\qquad\qquad\qquad\qquad\qquad\qquad\qquad\qquad\quad\tau^{N-1}d\tau\biggr)\rho^{N-1} d\rho\biggr) dH^{N-1}(x')
\\
&=\int_{|x'|=1}\left(\int_0^{\bar r}\left(\Phi_1(\rho)u^\sharp(\rho)\rho^{N-1}\right.\right.\\
&\quad\left.\left.-\rho^{N-1}\int_{\bar r}^{+\infty}\left(\int_{|y'|=1}J^\sharp(\rho x'-\tau y') u^\sharp(\tau)\,dH^{N-1}(y')\right)\tau^{N-1}d\tau\right) d\rho\right)dH^{N-1}(x').
\end{split}
\end{equation}

Let us observe that, in view of \eqref{max>0},
we have
\begin{align*}
&\Phi_1(\rho)u^\sharp(\rho)\rho^{N-1}=\left(-\Phi_1(\rho)\int_\rho^{\bar r}u^\sharp(\sigma)\sigma^{N-1}d\sigma
\right)'+\Phi_1'(\rho)\int_\rho^{\bar r}u^\sharp(\sigma)\sigma^{N-1}d\sigma\\
&\qquad\ge\left(-\Phi_1(\rho)\int_\rho^{\bar r}u^\sharp(\sigma)\sigma^{N-1}d\sigma
\right)'+\Phi'_1(\rho)\int_\rho^{\bar r}v(\sigma)\sigma^{N-1}d\sigma,
\end{align*}
where we have used that $\Phi_1'(\rho)\geq 0$ and the fact that being $\bar{r}$ a maximum point for the function \eqref{w} we have
\[
\int_\rho^{\bar r}u^\sharp(\sigma)\sigma^{N-1}d\sigma
\geq
\int_\rho^{\bar r}v(\sigma)\sigma^{N-1}d\sigma.
\]
Moreover, we also have that
\begin{equation*}
\int_0^{\bar r}\left(-\Phi_1(\rho)\int_\rho^{\bar r}u^\sharp(\sigma)\sigma^{N-1}d\sigma
\right)'d\rho=\Phi_1(0)\int_0^{\bar r}u^\sharp(\sigma)\sigma^{N-1}d\sigma>
\Phi_1(0)\int_0^{\bar r}v(\sigma)\sigma^{N-1}d\sigma.
\end{equation*}
Therefore, by using this inequality in \eqref{Juu_insieme}, we have
\begin{align*}
&\int_{B_{\bar r}}\int_{B_{\bar r}^c}J^\sharp(x-y) \bigl(u^\sharp(x)-u^\sharp(y)\bigr)dydx
\\
&>\int_{|x'|=1}\left(\int_0^{\bar r}\left(\Phi_1(\rho)v(\rho)\rho^{N-1}\right.\right.\\
&\quad\left.\left. -\rho^{N-1}\int_{\bar r}^{+\infty}\left(\int_{|y'|=1}J^\sharp(\rho x'-\tau y') u^\sharp(\tau)\,dH^{N-1}(y')\right)\tau^{N-1}d\tau\right) d\rho\right)dH^{N-1}(x')
\\
&=\int_{B_{\bar r}}\int_{B_{\bar r}^c}J^\sharp(x-y) \bigl(v(x)-u^\sharp(y)\bigr)dydx.
\end{align*}
We can repeat a similar argument for the integral on $B_{\bar r}^c$
\begin{equation}
    \label{Juv_insime}
    \begin{split}
    &\int_{B_{\bar r}}\int_{B_{\bar r}^c}J^\sharp(x-y) \bigl(v(x)-u^\sharp(y)\bigr)dydx=
\\
&=\int_{|y'|=1}\left(\int_{\bar r}^{+\infty}\left(\int_0^{\bar r}\left(\int_{|x'|=1}J^\sharp(\rho x'-\tau y') \bigl(v(\rho)-u^\sharp(\tau)\bigr)dH^{N-1}(x')\right)\right.\right.\\
&\qquad\qquad\qquad\qquad\qquad\qquad\qquad\qquad\qquad\qquad\qquad\rho^{N-1}d\rho\biggr)\tau^{N-1} d\tau\biggr)dH^{N-1}(y')
\\
&=\int_{|y'|=1}\left(\int_{\bar r}^{+\infty}\left(\int_0^{\bar r}\left(\int_{|x'|=1}J^\sharp(\rho x'-\tau y') v(\rho)dH^{N-1}(x')\right)\rho^{N-1}d\rho\right)\tau^{N-1}\right.\\
&\qquad\qquad\qquad\qquad\qquad\qquad\qquad\qquad\qquad\quad-\Phi_2(\tau)u^\sharp(\tau)\tau^{N-1} \biggr)d\tau\biggr)dH^{N-1}(y').
        \end{split}
\end{equation}

Indeed, let us observe that, in view of \eqref{min<0}, we have
\begin{align*}
&\Phi_2(\tau)u^\sharp(\tau)\tau^{N-1}=\left(\Phi_2(\tau)\int_{\bar r}^\tau u^\sharp(\sigma)\sigma^{N-1}d\sigma
\right)'-\Phi_2'(\tau)\int_{\bar r}^\tau u^\sharp(\sigma)\sigma^{N-1}d\sigma\le\\
&\qquad\le\left(\Phi_2(\tau)\int_{\bar r}^\tau u^\sharp(\sigma)\sigma^{N-1}d\sigma
\right)'-\Phi'_2(\tau)\int_{\bar r}^\tau v(\sigma)\sigma^{N-1}d\sigma.
\end{align*}
where we have used that $\Phi_2 '(\tau)\leq 0$, and we also have that
\begin{equation*}
\int_{\bar r}^{R}\left(\Phi_2(\tau)\int_{\bar r}^\tau u^\sharp(\sigma)\sigma^{N-1}d\sigma
\right)'d\rho=\Phi_2(R)\int_{\bar r}^{R}u^\sharp(\sigma)\sigma^{N-1}d\sigma\le
\Phi_2(R)\int_{\bar r}^{R}v(\sigma)\sigma^{N-1}d\sigma.
\end{equation*}
Therefore, by using this inequality in \eqref{Juv_insime}, we have
\begin{equation}
\label{Jvv_insieme}
    \begin{split}    
&\int_{B_{\bar r}}\int_{B_{\bar r}^c}J^\sharp(x-y) \bigl(v(x)-u^\sharp(y)\bigr)dydx\ge
\\
&\ge\int_{|y'|=1}\left(\int_{\bar r}^{+\infty}\left(\int_0^{\bar r}\left(\int_{|x'|=1}J^\sharp(\rho x'-\tau y') v(\rho)dH^{N-1}(x')\right)\rho^{N-1}d\rho\right)\tau^{N-1}\right.\\
&\qquad\qquad\qquad\qquad\qquad\qquad\qquad\qquad\qquad\qquad-\Phi_2(\rho)v(\tau)\tau^{N-1} \biggr)d\tau\biggr)dH^{N-1}(y')
\\
&=\int_{B_{\bar r}}\int_{B_{\bar r}^c}J^\sharp(x-y) \bigl(v(x)-v(y)\bigr)dydx.
    \end{split}
\end{equation}
Hence, by joining \eqref{Juu_insieme}, \eqref{Juv_insime} and \eqref{Jvv_insieme}, we have then proved
\begin{equation*}
\int_{B_{\bar r}}\int_{B_{\bar r}^c}J^\sharp(x-y) \bigl(u^\sharp(x)-u^\sharp(y)\bigr)dydx>
\int_{B_{\bar r}}\int_{B_{\bar r}^c}J^\sharp(x-y) \bigl(v(x)-v(y)\bigr)dydx.
\end{equation*}

Finally, by using Proposition \ref{MaxMin}, we have
\begin{equation*}
\int_{B_{\bar r}}c_\sharp(x)(u^\sharp(x)-v(x))dx>0,
\end{equation*}
that contradicts \eqref{insieme_Br_differenza} at $r=\bar r$.

{\bf Step 3.} {\it The general case.}

If there are no assumptions on the sign of $f$, Theorem \ref{main_thm_stationary}  can be applied to the weak solution $w$ to the equation 
\begin{equation}
    \label{Ltildeu}
\mathcal{L}w+cw=|f|\ \text{ in }\Omega.
\end{equation}
On the other hand, we know that 
\begin{align}
\label{Lu}
\mathcal{L}u+c u=f\ \text{ in }\Omega.
\end{align}
By the linearity of the operator $\mathcal L$, by subtracting \eqref{Ltildeu} to \eqref{Lu} we have
\begin{equation*}
\mathcal{L}(u-w)+c(u-w)=f-|f|\leq 0\ \text{ in }\Omega.
\end{equation*}
By Proposition \ref{prop_weak_max_p}, we deduce that $u-w\leq 0$ in $\Omega$, that is 
\begin{equation}
\label{uleqtildeu}
u\leq w\ \text{ in }\Omega.
\end{equation}
Furthermore, by summing \eqref{Ltildeu}
and \eqref{Lu}, we have
\begin{equation*}
\mathcal{L}(u+w)+c(u+w)=f+|f|\ge 0\ \text{ in }\Omega.
\end{equation*}
Again Proposition \ref{prop_weak_max_p} implies that $u+w\geq 0$ in $\Omega$, that is 
\begin{equation}
\label{-utildelequ}
-w\leq u \ \text{ in }\Omega.
\end{equation}
Therefore \eqref{uleqtildeu} and \eqref{-utildelequ} implies that $|u|\leq w$. Since \eqref{risultato_confronto} holds for $w$ in place of $u$, we have 
\begin{equation*}
\int_{B_r}u^\sharp(x)dx\leq \int_{B_r}w^\sharp(x)dx\leq \int_{B_r}v(x)dx\qquad\forall r\geq 0,
 \end{equation*}
that gives the conclusion.

{\bf Step 4.} {\it The energy estimate \eqref{energy_estimate}.}

By using the weak formulation \eqref{weak_formulation} of problems \eqref{problem1} and \eqref{problem2} with $\varphi=u$ and $\varphi=v$, respectively, and employing Proposition \ref{prop_equiv_conc} (ii), we have
\[
\begin{split}
&\frac 12 \int_{\R^{N}}\int_{\R^{N}}K(x,y) \left(u(x)-u(y)\right)^2dydx+\int_{\R^N} c(x)u^2(x)dx=\int_{\R^N} f(x)u(x)dx\\ 
&\quad\leq\int_{\R^N} f^\sharp(x)v^2(x)dx=\frac 12\int_{\R^{N}}\int_{\R^{N}}J^\sharp(x-y) \left(v(x)-v(y)\right)dydx+\int_{\R^N} c_\sharp(x)v^2(x)dx.
\end{split}
\]
\begin{remark}\label{Extensionconc}
We also observe that from the proof of Theorem \ref{main_thm_stationary} that \eqref{risultato_confronto} still holds when the symmetric rearrangement $f^{\sharp}$ in problem \eqref{problem2} is replaced by a radially decreasing function $g$ \emph{more concentrated than $f$}, \emph{i.e.} when
\[
\int_{B_r}f^\sharp (x)dx\leq\int_{B_r}g(x)dx\qquad \forall r>0.
\]
\end{remark}

\begin{remark} The main Theorem \ref{main_thm_stationary} gives us the possibility to transfer the study of the $L^p$ regularity scale of the solution $u$ to \eqref{problem1} to the same regularity for the solution $v$ to the radial problem \eqref{problem2}. For instance, assume that $\mathcal{L}$ is a stable-like operator, whose kernel $K(x,y)=\mathsf{K}(|x-y|)$, where $\mathsf{K}$ satisfies \eqref{rough_assumptions}. Then $\mathcal{L}^{\sharp}=(-\Delta)^{s}$ and we can take the advantage that $v$ can be written in the integral form in terms of the explicit Green function of the fractional Laplacian on the ball, see the proof of \cite[Th. 3.2]{ferone2021symmetrization}. More precisely, we have that for any $N\geq2$, $f\in L^{p}(\Omega)$, with $p\geq 2N/(N+2s)$, the following properties hold:
\begin{enumerate}
\item if $p<N/(2s)$ then $u\in L^{q}(\Omega)$, with
\begin{equation*}
q=\frac{Np}{N-2sp}
\end{equation*}
and there exists a constant $C$ such that:
\[
\|u\|_{L^{q}(\Omega)}\leq C\|f\|_{L^{p}(\Omega)};
\]
\item if $p>N/(2s)$ then $u\in L^{\infty}(\Omega)$ and there exists a constant $C$ such that:
\[
\|u\|_{L^{\infty}(\Omega)}\leq C\|f\|_{L^{p}(\Omega)};
\]
\item if $p=N/(2s)$, then $u\in L_{\Phi_{p}(\Omega)}$ and there exists a constant $C$ such that:
\[
\|u\|_{L_{\Phi_{p}(\Omega)}}\leq C\|f\|_{L^{p}(\Omega)},
\]
where $L_{\Phi_{p}(\Omega)}$ is the Orlicz space generated by the $N$-function
\[
\Phi_{p}(t)=\exp(|t|^{p^{\prime}})-1.
\]
\end{enumerate}
Similar arguments can be used also in case $N=1$. 
\end{remark}

\section{Concentration comparisons for linear parabolic problems}\label{parabolic_sec}
We extend our results to the parabolic case, as stated in \eqref{risultato_confronto_evo}, by using the method of discretization in time by induction along with the implicit Euler method \cite{lions1969quelques}.

Moreover, to give a weak formulation of problem \eqref{problem1_evo}, we introduce the following Banach space
\[
\mathcal{W}(0,T;H_{\Omega}(\R^N,K)):=\{ u\in L^2(0,T; H_{\Omega}(\R^N,K)) \ : \ u_t \in L^2(0,T;H_\Omega (K,\R^N)')\},
\]
endowed with the norm
\[
||u||_{\mathcal W(0,T;H_{\Omega}(\R^N,K))}=\int_0^T||u(x;t)||^2_{H_{\Omega}(\R^N,K)}dt+\int_0^T||u_t(x;t)||^2_{H_{\Omega}(\R^N,K)^{*}}dt.
\]
Once fixed the notations, we give the following definition.
\begin{definition}\label{definition_evo}
Let $u_0\in L^2(\Omega)$, $c\in L^\infty(\Omega\times (0,T))$ be a nonnegative function, $f\in L^2(\Omega\times (0,T))$ and $u_0\in L^2(\Omega)$.  We say that $u\in \mathcal{W}(0,T;H_{\Omega}(\R^N,K))$ is a weak solution of \eqref{problem1_evo} if for every $\varphi\in H_\Omega(\R^N,K)$, we have
\[
\begin{split}
<u_{t}(t),\varphi>_{H_{\Omega}(\R^N,K)^{'}}  +\frac 12 \int_{\R^{N}}&\int_{\R^{N}} K(x,y)  \left(u(x,t)-u(y,t)\right)\left(\varphi(x)-\varphi(y)\right)dx\,dy\\
&+\int_{\Omega} c(x,t)u(x,t)\varphi(x)dx=\int_{\Omega}f(x,t)\varphi (x)dx\quad \text{ a.e. }t\in (0,T),
\end{split}
\]
and $u(0,x)=u_0(x)$ a.e. in $\Omega$.
\end{definition}
At this stage, we give the proof of the main Theorem in the parabolic case.\\[0.4pt]

\noindent {\it Proof of Theorem \ref{main_thm_evolution}. }
Theorem \cite[Th.5.3]{felsinger2015dirichlet} assures the existence and the uniqueness of problems \eqref{problem1_evo} and \eqref{problem2_evo} in the sense of Definition \ref{definition_evo}. Our aim now is first to introduce the \emph{implicit time discretization scheme}, which shall provides the existence of suitable approximating solutions, to which the elliptic concentration comparison Theorem \eqref{main_thm_stationary} can be applied. The main weak convergences in the approximation procedures will allow to pass to the limit in the final mass concentration estimate.
We divide the proof in three steps.\\ [0.5cm]
{\bf Step 1.} {\it The implicit time discretization scheme.}\\
\noindent Let us fix $N\in \N$ define $\Delta t:=\frac{T}{N}$, $t_n:=n\Delta t$, for $n=0,...,N$, and
\[
c_{n}(x):=\frac 1 {\Delta t}\int_{t_{n}}^{t_{n+1}}c(x,\tau)d\tau\qquad
f_{n}(x):=\frac 1 {\Delta t}\int_{t_{n}}^{t_{n+1}}f(x,\tau)d\tau\qquad x\in \Omega,\ \  n= 0,...,N-1.
\]
Let us consider the problem where the time derivative $u_{t}$ is replaced by a difference quotient
\begin{equation}
    \label{problem1_discr}
\begin{cases}
\mathcal Lu_{n+1}+c_{n} u_{n+1}+\dfrac{u_{n+1}}{\Delta t}=f_{n}+\dfrac{u_{n}}{\Delta t}\quad & \text{in }\Omega\\
u_{n+1}\in  H_\Omega(\R^N,K),
\end{cases}
\end{equation}
where $n=0,...,N-1$ and $u_0=u_0(x)$ is given in the initial conditions.
Moreover, let us set 
\[
d_{n}=(c_{n})_{\sharp}\quad \text{in }\Omega\\
\]
and
\[
g_{n}=f_{n}^{\sharp}\quad \text{in }\Omega\\
\]
and consider the corresponding symmetrized problems
\begin{equation}
    \label{problem2_discr}
\begin{cases}
\mathcal L^\sharp v_{n+1}+d_{n} v_{n+1}+\dfrac{v_{n+1}}{\Delta t}=g_{n}+\dfrac{v_{n}}{\Delta t}\quad & \text{in }\Omega^{\sharp}\\
v_{n+1}\in  H_{\Omega^{\sharp}}(\R^N,K),
\end{cases}
\end{equation}
with $v_0=u_{0}^\sharp(x)$.
Now we define the piecewise constant interpolations
\[
u_{N}(x,t)=\sum_{n=0}^{N-1}u_{n+1}(x)\chi_{[t_{n},t_{n+1}]}(t),\quad v_{N}(x,t)=\sum_{n=0}^{N-1}v_{n+1}(x)\chi_{[t_{n},t_{n+1}]}(t),
\]

\[
c_{N}(x,t)=\sum_{n=0}^{N-1}c_{n}(x)\chi_{[t_{n},t_{n+1}]}(t),\quad d_{N}(x,t)=\sum_{n=0}^{N-1}d_{n}(x)\chi_{[t_{n},t_{n+1}]}(t),
\]

\[
f_{N}(x,t)=\sum_{n=0}^{N-1}f_{n}(x)\chi_{[t_{n},t_{n+1}]}(t),\quad g_{N}(x,t)=\sum_{n=0}^{N-1}g_{n}(x)\chi_{[t_{n},t_{n+1}]}(t).
\]

{\bf Step 2.} {\it Convergence of the approximating sequences.}\\
We show that $\{u_N\}_{N\in\N}$ and $\{v_N\}_{N\in\N}$ converge to the weak solutions $u$ and $v$ of problems \eqref{problem1_evo} and \eqref{problem2_evo}. Though the argument is rather classical and is based on achieving suitable discrete energy estimates, we reproduce here the main steps for the sake of completeness. We write problem \eqref{problem1_discr} for $n=k$, $n\leq N$ and test with $u_{k+1}$, in order to obtain
\[
\frac{\Delta t}{2}||u_{k+1}||_{H_\Omega (\R^N, K)}^{2}+(u_{k+1}-u_{k},u_{k+1})_{L^{2}(\Omega)}+\int_{\Omega}c_{k}u_{k+1}^{2}dx=\Delta t\,(f_{k},u_{k+1})_{L^{2}(\Omega)}.
\]
Since $c_{k}$ is nonnegative, summing the previous equality on $k$ from $0$ to $n$, we find
\[
\frac{\Delta t}{2}\sum_{k=0}^{n}||u_{k+1}||_{H_\Omega (\R^N, K)}^{2}+\sum_{k=0}^{n} (u_{k+1}-u_{k},u_{k+1})_{L^{2}(\Omega)}\leq \Delta t \sum_{k=0}^{n} \,(f_{k},u_{k+1})_{L^{2}(\Omega)}.
\]
Using the identity
\[
(u_{k+1}-u_{k},u_{k+1})_{L^{2}(\Omega)}=\frac{1}{2}\left(\|u_{k+1}-u_{k}\|_{L^{2}(\Omega)}^{2}+\|u_{k+1}\|_{L^{2}(\Omega)}^{2}-\|u_{k}\|_{L^{2}(\Omega)}^{2}\right), \]
we can write the previous estimate in the following form
\begin{equation}\label{firstdiscret}
\sum_{k=0}^{n} \|u_{k+1}-u_{k}\|_{L^{2}(\Omega)}^{2}+\|u_{n+1}\|_{L^{2}(\Omega)}^{2}-\|u_{0}\|_{L^{2}(\Omega)}^{2}+\Delta t\, \sum_{k=0}^{n} ||u_{k+1}||_{H_\Omega (\R^N, K)}^{2}\leq 2\Delta t \sum_{k=0}^{n} \,(f_{k},u_{k+1})_{L^{2}(\Omega)}.
\end{equation}
On the other hand, using Poincar\'e and Young inequalities we have
\[
(f_{k},u_{k+1})_{L^{2}(\Omega)}\leq \frac{1}{2} ||u_{k+1}||_{H_\Omega (\R^N, K)}^{2}+ C\frac{\|f_{k}\|^{2}_{L^{2}(\Omega)}}{2},
\]
for some constant $C>0$, hence estimate \eqref{firstdiscret} leads 
\begin{equation}\label{firstdiscret2}
\sum_{k=0}^{n} \|u_{k+1}-u_{k}\|_{L^{2}(\Omega)}^{2}+\|u_{n+1}\|_{L^{2}(\Omega)}^{2}+\frac{1}{2}\Delta t\, \sum_{k=0}^{n} ||u_{k+1}||_{H_\Omega (\R^N, K)}^{2}\leq \|u_{0}\|_{L^{2}(\Omega)}^{2}+ C\Delta t \sum_{k=0}^{N-1} \| f_{k}\|_{L^{2}(\Omega)}^{2}.
\end{equation}
Now we introduce the piecewise linear interpolation
\[
\hat{u}_{N}(x,t)=\sum_{n=0}^{N-1}\left(u_{n}(x)+\frac{t-t_{n}}{\Delta t}(u_{n+1}(x)-u_{n}(x))\right) \chi_{[t_{n},t_{n+1}]}(t).
\]
It is clear that $u_{N}\in L^{2}(0,T;H_\Omega(\R^N,K))$, while $\hat{u}_{N}\in C([0,T];H_\Omega(\R^N,K))$. Moreover
\[
\max_{t\in [0,T]}\|\hat{u}_{N}(t)\|_{L^{2}(\Omega)}^{2}=\max_{0\leq n\leq N}\|u_{n}\|_{L^{2}(\Omega)}^{2},\quad \int_{0}^{T}\|u_{N}(t)\|^{2}_{H_\Omega(\R^N,K)}dt=\Delta t\sum_{n=1}^{N}\|u_{n}\|^{2}_{H_\Omega(\R^N,K)} ,
\]
hence \eqref{firstdiscret2} gives
\begin{equation}\label{secondiscret}
\max_{t\in [0,T]}\|\hat{u}_{N}(t)\|_{L^{2}(\Omega)}^{2}+\frac{1}{2}\int_{0}^{T}\|u_{N}(t)\|^{2}_{H_\Omega(\R^N,K)}dt\leq C\left( \|u_{0}\|_{L^{2}(\Omega)}^{2}+ \Delta t \sum_{k=0}^{N-1} \| f_{k}\|_{L^{2}(\Omega)}^{2}\right).
\end{equation}
Moreover, an explicit calculation gives
\[
\int_{0}^{T}\|\hat{u}_{N}(t)-u_{N}(t)\|_{L^{2}(\Omega)}^{2}dt=\frac{\Delta t}{3}\sum_{n=0}^{N-1}\|u_{n+1}-u_{n}\|_{L^{2}(\Omega)}^{2}
\]
and using \eqref{firstdiscret2} again we find
\begin{equation}\label{thirdiscret}
\int_{0}^{T}\|\hat{u}_{N}(t)-u_{N}(t)\|_{L^{2}(\Omega)}^{2}dt\leq C\,\Delta t \left( \|u_{0}\|_{L^{2}(\Omega)}^{2}+ \Delta t \sum_{k=0}^{N-1} \| f_{k}\|_{L^{2}(\Omega)}^{2}\right).
\end{equation}
Observe moreover that by the integral Young inequality we find
\[
\Delta t \sum_{n=0}^{N-1}\|f_{n}\|_{L^{2}(\Omega)}^{2}=\int_{0}^{T}\|f_{N}(t)\|_{L^{2}(\Omega)}^{2}dt\leq\int_{0}^{T}\|f(t)\|_{L^{2}(\Omega)}^{2}dt=\|f\|_{L^{2}(\Omega\times (0,T))}^{2}.
\]
Hence estimates \eqref{secondiscret} and \eqref{thirdiscret} imply
\begin{equation}\label{fourest}
\|\hat{u}_{N}\|_{L^{\infty}(0,T;L^{2}(\Omega))}+\|u_{N}\|_{L^{2}(0,T;H_\Omega(\R^N,K))}\leq C,\quad \|\hat{u}_{N}(t)-u_{N}(t)\|_{L^{2}(\Omega\times (0,T))}\leq C\sqrt{\Delta t}.
\end{equation}
From \eqref{fourest} we have in particular that, up to subsequences,
\begin{equation}\label{weakast}
\hat{u}_{N}\overset{\ast}{\rightharpoonup} u \quad\text{in }L^{\infty}(0,T;L^{2}(\Omega)),
\end{equation}
\[
u_{N}\rightharpoonup z \quad\text{weakly in }L^{\infty}(0,T;H_\Omega(\R^N,K))).
\]
and by the second estimate in \eqref{fourest} we find $u=z$, therefore
\begin{equation}\label{weakconv}
u_{N}\rightharpoonup u \quad\text{weakly in }L^{2}(0,T;H_\Omega(\R^N,K))).
\end{equation}
Now we observe that problems \eqref{problem1_discr} can be rewritten in the following unified form
\begin{equation}\label{approximeq}
\hat{u}_{N}^{\prime}(t)+\mathcal{L}u_{N}(t)+c_{N}(t)u_{N}(t)=f_{N}(t),\quad \text{in }(0,T).
\end{equation}
Observe that from \eqref{weakast} we have
\[
\hat{u}_{N}^{\prime}\rightarrow u^{\prime}\text{ in }\mathcal{D}^{\prime}(0,T; H_\Omega(\R^N,K))^{\prime})
\]
while from \eqref{weakconv}
\[
\mathcal{L}u_{N}\ \rightharpoonup \mathcal{L}u \quad\text{weakly in }L^{2}(0,T;H_\Omega(\R^N,K)^{\prime})
\]
and it is not difficult to show that
\[
c_{N}\rightarrow c,\quad f_{N}\rightarrow f\text{ strongly in }L^{2}(\Omega\times (0,T)).
\]
Passing to the limit as $N\rightarrow\infty$ in \eqref{approximeq} we find that $u$ satisfies
\[
u^{\prime}(t)+\mathcal{L}u(t)+c(t)u(t)=f(t),\quad \text{in }\mathcal{D}^{\prime}(0,T; H_\Omega(\R^N,K)^{\prime}),
\]
which gives in particular that $u^{\prime}\in L^{2}(0,T;H_\Omega(\R^N,K)^{\prime})$.  Choosing the function $\eta(x,t)=\varphi(x)\xi(t)$ with $\varphi \in C_{0}^{\infty}(\Omega)$ and $\xi(t)\in C_{0}^{\infty}(0,T)$ we have that $u$ satisfies relation \eqref{definition_evo}. It remains to show that $u$ verifies the initial condition. Notice that since
\[
\hat{u}_{N}^{\prime}=-\mathcal{L}u_{N}-c_{N}u_{N}+f_{N}(t)\rightharpoonup- \mathcal{L}u-cu+f=u^{\prime} \quad\text{weakly in }L^{2}(0,T;H_\Omega(\R^N,K)^{\prime}),
\]
we have that
\[
\hat{u}_{N}\rightharpoonup u \quad\text{weakly in }\mathcal{W}(0,T;H_{\Omega}(\R^N,K))
\]
and since
\[
\mathcal{W}(0,T;H_{\Omega}(\R^N,K))\hookrightarrow C([0,T];L^{2}(\Omega))
\]
with continuous injection, we have
\[
\hat{u}_{N}(0) \rightharpoonup u(0) \quad\text{weakly in }L^{2}(\Omega) 
\]
but $\hat{u}_{N}(0)=u_{0}$ hence $u(0)=u_{0}$ andd $u$ is a weak energy solution to problem \eqref{problem1_evo}.\\
\noindent In order to show that $\{v_N\}_{N\in\N}$ converges (up to a subsequence) to the weak solution $v$ to problem \eqref{problem2_evo} we use similar arguments, once we simply observe that
\[
d_{N}\rightarrow c_{\sharp}, \quad g_{N}\rightarrow f^{\sharp}\text{ strongly in }L^{2}(\Omega^{\sharp}\times (0,T)):
\]
indeed, the contractivity of the map $\eta\rightarrow \eta^{\sharp}$ from $L^{p}(\Omega)$ to $L^{p}(\Omega^{\sharp})$, $p\in [1,\infty]$ implies that
\[
\|d_{N}-c_{\sharp}\|_{L^{2}(\Omega^{\sharp}\times(0,T))}\leq \|c_{N}-c\|_{L^{2}(\Omega\times(0,T))},\quad \|g_{N}-f^{\#}\|_{L^{2}(\Omega^{\sharp}\times(0,T))}\leq \|f_{N}-f\|_{L^{2}(\Omega\times(0,T))}.
\]

\noindent{\bf Step 3.} {\it The comparison result.}\\
Let us look at the discretized elliptic problems \eqref{problem1_discr}-\eqref{problem2_discr} as $n=0,1,...,N-1$. For $n=0$,  we observe that
\[
\left(c_0(x)+\frac 1 {\Delta t}\right)_\sharp=(c_0)\sharp (x)+\frac 1 {\Delta t}=d_{0}(x)+\frac 1 {\Delta t}\quad \text{in }\Omega.\]
Moreover, we observe that, by using the Hardy inequality \eqref{HardyLit}, the following inequality holds true
\begin{equation*}
\int_{B_r}\left(f_0(x)+\frac{u_0(x)}{\Delta t}\right)^\sharp dx\le  \int_{B_r}\left(f_0^\sharp(x)+\frac{u^\sharp_0(x)}{\Delta t}\right) dx\qquad \forall r>0.
\end{equation*} 
So, from Theorem \ref{main_thm_stationary} and Remark \ref{Extensionconc}, the following inequality holds
\[
\int_{B_r}u_1^\sharp (x)dx\leq\int_{B_r}v_1(x)dx\qquad \forall r>0.
\]
Assume by induction that
\[
\int_{B_r}u_{n}^\sharp (x)dx\leq\int_{B_r}v_{n}(x)dx\qquad \forall r>0.
\]
holds for some $n\leq (N-1)$.
Therefore
\begin{equation*}
\int_{B_r}\left(f_n(x)+\frac{u_{n}(x)}{\Delta t}\right)^\sharp dx\le  \int_{B_r}\left(f_n^\sharp(x)+\frac{u^\sharp_{n}(x)}{\Delta t}\right) dx\le \int_{B_r}\left(f_n^\sharp(x)+\frac{v_{n}(x)}{\Delta t}\right) dx\qquad \forall r>0.
\end{equation*}
Thus Theorem \ref{main_thm_stationary} and Remark \ref{Extensionconc} give again
\[
\int_{B_r}u_{n+1}^\sharp (x)dx\leq\int_{B_r}v_{n+1}(x)dx\qquad \forall r>0.
\]
Taking into account the form of the piecewise constant interpolation functions $u_{N}$ and $v_{N}$, this concentration comparison can be written into the form
\begin{equation}
\int_{B_r}u_{N}^\sharp (x,t)dx\leq\int_{B_r}v_{N}(x,t)dx\qquad \text{ for all } r>0, t\in(0,T).\label{conccompapruv}
\end{equation}
Now we wish to pass to the limit as $N\rightarrow\infty$ in \eqref{conccompapruv}. By Proposition \eqref{prop_equiv_conc} we find 
\[
\int_{\Omega}u_{N}(x,t)\,\varphi(x)\,dx\leq \int_{\Omega^{\sharp}}v_{N}(x,t)\,\varphi^{\sharp}(x)\,dx,
\]
for all nonnegative $\varphi\in L^{2}(\Omega)$. Observe that $u_{N}$ converges weakly to $u$ in $L^{2}(\Omega\times (0,T))$ and $v_{N}$ converges weakly to $v$ in $L^{2}(\Omega^{\sharp}\times (0,T))$. Taking a nonnegative bounded function $\xi$ in $(0,T)$ we can write
\[
\int_{0}^{T} \int_{\Omega}u_{N}(x,t)\,\varphi(x)\xi(t)\,dx\, dt\leq \int_{0}^{T}\int_{\Omega^{\sharp}}v_{N}(x,t)\,\varphi^{\sharp}(x)\xi(t)\,dx\,dt
\]
hence we can pass to the limit as $N\rightarrow \infty$ in order to obtain
\[
\int_{0}^{T} \int_{\Omega}u(x,t)\,\varphi(x)\xi(t)\,dx\, dt\leq \int_{0}^{T}\int_{\Omega^{\sharp}}v(x,t)\,\varphi^{\sharp}(x)\xi(t)\,dx\,dt.
\]
for all $\xi\in L^{\infty}_{+}(0,T)$.
Setting
\[
\psi(t)=\int_{\Omega}u(x,t)\,\varphi(x)\,dx-\int_{\Omega^{\sharp}}v(x,t)\,\varphi^{\sharp}(x)dx
\]
the previous inequality implies
\[
\int_{0}^{T}\psi_{+}(t)\xi(t)dt=\int_{0}^{T}\psi(t)\chi_{\left\{\psi(t)\geq 0\right\}}dt\leq 0,
\]
that is $\psi_{+}(t=0$ for a.e. $t>0$, namely
\[
\int_{\Omega}u(x,t)\,\varphi(x)\,dx\leq\int_{\Omega^{\sharp}}v(x,t)\,\varphi^{\sharp}(x)dx
\]
for all $\varphi\in L^{2}_{+}(\Omega)$. Using Proposition \eqref{prop_equiv_conc} again, we find (recall that $u,\,v$ are $L^{2}$-valued maps continuous in time) 
\[
\int_{B_r}u^\sharp (x,t)dx\leq\int_{B_r}v(x,t)dx\qquad \text{ for all } r>0, t\in(0,T)
\]
and we achieve the desired  conclusion.
\quad\quad\quad\quad\quad\quad\quad\quad\quad\quad\quad\quad\quad\quad\quad\quad\quad\quad\quad\quad\qed

\section*{Acknowledgments}
B.V. wishes to thank Fernando Quir\'os and Irene Gonz\'alvez for kindly pointing the papers \cite{KasMimica, bae_kang}. 
The authors were partially supported by PRIN 2017 ``Direct and inverse problems for partial differential equations: theoretical aspects and applications'' and by Gruppo Nazionale per l'Analisi Matematica, la Probabilit\`a e le loro Applicazioni (GNAMPA) of Istituto Nazionale di Alta Matematica (INdAM).
This study was also carried out within the ``Partial differential equations and related geometric-functional inequalities'' and ``Geometric-Analytic Methods for PDEs and Applications (GAMPA)'' projects - funded by the Ministero dell'Universit\`a e della Ricerca - within the PRIN 2022 program (D.D.104 - 02/02/2022). This manuscript reflects only the authors' views and opinions and the Ministry cannot be considered responsible for them.
\nc


\bibliographystyle{siam}
\bibliography{bibliography.bib}

\

2000 \textit{Mathematics Subject Classification.}
35B45,  
35R11,   	
35J25. 


%
\textit{Keywords and phrases.} Symmetrization, nonlocal elliptic equations, nonnegative kernel.

\end{document}